\newcommand{\argmin}{\operatorname{argmin}}
\newcommand{\argmax}{\operatorname{argmax}}
\newcommand{\diag}{\operatorname{diag}}
\newcommand{\Rr}{{\mathbb{R}}}
\newcommand{\Gg}{{\mathcal{G}}}
\newcommand{\Kk}{{\mathcal{K}}}
\newcommand{\Pp}{{\mathcal{P}}}
\newcommand{\Sss}{{\mathbb{S}}}
\newtheorem{teo}{Theorem}
\newtheorem{lem}{Lemma}
\newtheorem{pro}{Proposition}
\newtheorem{definition}{Definition}
\newtheorem{hypothesis}{Assumption}
 \newcommand{\re}{\mathbb{R}}
\newcommand{\ep}{\epsilon}
\begin{document}

\title{Discrete mean field games}
\author{Diogo
  A. Gomes
\footnote{Departamento de Matem\'atica and CAMGSD, IST, Lisboa, Portugal. e-mail: dgomes@math.ist.utl.pt}, Joana Mohr \footnote{Instituto de Matem\'atica, UFRGS, 91509-900 Porto Alegre, Brasil. e-mail: rafars@mat.ufrgs.br} and Rafael Rig\~ao Souza \footnote{Instituto de Matem\'atica, UFRGS, 91509-900 Porto Alegre, Brasil. e-mail: joana.mohr@ufrgs.br}}

\date{\today} 

\maketitle

\thanks{D.G. was
partially supported by the CAMGSD/IST through FCT Program POCTI/FEDER
and by grant DENO/FCT-PT (PTDC/EEA-ACR/67020/2006).}

\thanks{R.R.S was
partially supported by CAPES, PROCAD, Projeto Universal CNPq 471473/2007-3.}

\thanks{J.M: was beneficiary of a CNPq PhD scholarship and now has a Post-Doc CNPq scholarship.}

\begin{abstract}
In this paper we study a mean field model for discrete time,
finite number of states,
dynamic games. These models arise in situations
that involve a very large number of agents
moving from state to state according to certain optimality criteria.
The mean field approach
for optimal control and differential games (continuous state and time)
was introduced by Lasry and Lions \cite{ll1, ll2, ll3}.
The discrete time, finite state space setting is motivated both by
its independent interest as well as by numerical
analysis questions which appear in the discretization of the problems introduced by
Lasry and Lions.

Our setting is the following: we assume that there is a very large number of identical agents
which can be in a finite number of states. Because the number of agents is
very large, we assume the mean field hypothesis, that is, that the only relevant information for the global evolution
is the fraction $\pi^n_i$ of players in each state $i$ at time $n$.
The agents look for minimizing a
running cost, which depends on $\pi$, plus a terminal cost $V^N$.
In contrast with optimal control, where usually only the terminal cost $V^N$ is necessary to solve the problem,
in mean-field games both the initial distribution of agents $\pi^0$ and the terminal cost $V^N$ are necessary to determine
the solutions, that is,  the distribution of players $\pi^n$ and value function
$V^n$, for $0\leq n\leq N$. Because both initial and terminal data needs to be specified, we call this problem
the initial-terminal value problem. Existence of solutions is non-trivial. Nevertheless, following the ideas of Lasry and Lions,
we were able to
establish existence and uniqueness, both for the stationary and for the initial-terminal value problems. We discuss
in some detail a particular model, the entropy penalized problem.
In the last part of the paper we prove the main result of the paper, namely the exponential convergence to a stationary solution
of $(\pi^0, V^0)$, as $N\to \infty$, for
the initial-terminal value problem with (fixed) data $\pi^{-N}$ and $V^N$.
\end{abstract}


\section{Introduction}

In this paper we study a mean field model for discrete time,
finite number of states,
dynamic games. These models arise in situations
that involve a very large number of agents
moving from state to state according to certain optimality criteria.
The mean field approach
for optimal control and differential games (continuous state and time)
was introduced by Lasry and Lions \cite{ll1, ll2, ll3}.
In the continuous state and time setting, mean field problems
gives rise to Hamilton-Jacobi equations coupled with transport equations.
The discrete time, finite state space setting is motivated both by
its independent interest as well as by numerical
analysis questions which appear in the discretization of the problems introduced by
Lasry and Lions. The discretization of these models
has been studied by I. Capuzzo-Dolcetta and Y. Achdou.

Our setting is the following: we assume that there is a very large number of identical agents
which can be in a finite number of states.
Each agent behaves individually and rationally,
moving from state to state according to certain optimality criteria.
Furthermore,
its decisions are based solely on the following information,
which is know by every agent,
the current state, and the fraction of agents in each state.
As in non-cooperative games, there may be interactions between the players
in different states, as we will explain in more detail bellow.
Because the number of agents is
very large, we assume the mean field hypothesis, that is, that only the fraction $\pi^n_i$ of players in each state $i$ at time $n$
is the relevant information for the global evolution. The mathematical justification of mean field models
has been investigated extensively by Lions and Lasry in yet to be published papers and we do not address these issues in this work.


Let $d>1$ and $N\geq 1$ be natural numbers, representing, respectively,
the number of possible states in which the agents can be at any given time, and the total duration of the process.
Let $\pi^0$ and $V^N$ be given $d$-dimensional vectors.
We suppose that $\pi^0$ is a probability vector, the initial probability distribution of agents among states,
and that $V^N$, the terminal cost, is an arbitrary vector.
A solution to the mean field game is a sequence of pairs of $d$-dimensional vectors
$$\{(\pi^n,V^n) \;  ; \;0\leq n \leq N \} \, ,$$
where $\pi^n$
is the probability distribution of agents among states at time $n$
and
$V^n_j$ is the expected minimum total cost for an agent at state $j$, at time $n$. These pairs must satisfy certain
optimality conditions that we describe in what follows:
at every time step, the agents in state $i$ choose a transition probability, $P_{ij}$,
from state $i$ to state $j$.
Given the transition probabilities $P_{ij}^n$ at time $0\leq n<N$, the distribution of agents at time $n+1$ is simply
\[
\pi^{n+1}_j=\sum_i\pi^n_i  P_{ij}^n \,.
\]
Associated to this choice there is a transition cost $c_{ij}(\pi, P)$. In the special case in which $c_{ij}$ only depends on $\pi$
and on the $i$th line of $P$ we use the simplified notation $c_{ij}(\pi, P_{i\cdot})$. This last case
arises when the choices of players in states $j\neq i$ do not influence the transition cost
to an agent in state $i$.
Let $e_i(\pi,P, V)$ be the average cost that agents which are in state $i$ incur when matrix $P$ is chosen, given the current distribution $\pi$ and the cost vector $V$ at the subsequent instant.
We assume that
\[
e_i(\pi, P, V)=\sum_j c_{ij}(\pi, P)P_{ij}+V_j P_{ij}.
\]
Define the probability simplex
$\mathbb{S} = \{(q_1,...,q_d)\; ;\; q_{j} \geq 0 \, \forall j, \sum_{j=1}^d q_{j} = 1 \}$.
The set of $d \times d$ stochastic matrices is identified with $\mathbb{S}^d$.
Given a  stochastic matrix $P \in \mathbb{S}^d$ and a probability vector $q \in \mathbb{S}$, we define
$\Pp(P,q,i)$
to be the $d \times d$ stochastic matrix obtained from $P$ by replacing the $i$-th row by $q$, and leaving all others unchanged.
\begin{definition} Fix a probability vector $\pi\in \mathbb S$ and a cost vector $V\in \Rr^d$.  A stochastic matrix $P\in \mathbb{S}^d$
is a {\em Nash minimizer} of $e(\pi, \cdot, V) $ if for each $i\in \{1,...,d\}$ and any $q\in \Sss$
$$e_i(\pi,P,V)\leq e_i(\pi,\Pp(P,q,i),V).$$
\end{definition}

\begin{definition} Suppose that for each $\pi\in \mathbb{S}$ and $V\in \Rr^d$ there exists a Nash minimizer $P\in \mathbb{S}^d$ of $e(\pi, \cdot, V)$.
Let $N\geq 1$, $\pi^0\in \mathbb{S}$ (the initial distribution of states), and $V^N \in \re^d$ (the terminal cost).

A sequence of pairs of d-dimensional vectors  $$\{(\pi^n,V^n) \;  ; \;0\leq n \leq N \} \, $$ is a {\em solution of the mean field game} if
for every $0\leq n \leq N-1$
\begin{equation}
\label{mfg}
\left\{
\begin{array}{rcl}
\displaystyle V^n_i&=&\sum_j c_{ij}(\pi^n,P^n)P_{ij}^n+V^{n+1}_jP_{ij}^n\\\\
\displaystyle \pi^{n+1}_j&=&\sum_i \pi^n_i P_{ij}^n,
\end{array}
\right.
\end{equation}
for some Nash minimizer $P^n\in \mathbb{S}^d$  of $e(\pi^n, \cdot, V^{n+1}) $.
\end{definition}

Until the end of this section we will assume that for all $(\pi, V)\in \Sss\times \Rr^d$ there exists a unique Nash minimizer $\bar P$ of $e(\pi,P,V)$.
Conditions which guarantee the uniqueness of a Nash minimizer will be studied in \S \ref{dcc}. 
Under the uniqueness of a Nash minimizer for $e$, we can define the (backwards) evolution operator for the value function
\begin{equation}
\label{ggdef}
\Gg_\pi(V)=e(\pi,\bar P,V),
\end{equation}
as well as the (forward) evolution operator for $\pi$
\begin{equation}
\label{kkdef}
\Kk_V(\pi)=\pi \bar P.
\end{equation}
Since the operator $\Gg_\pi$ commutes with addition with constants, it can be regarded as a map from $\Rr^d/\Rr$ to  $\Rr^d/\Rr$. Here
$\Rr^d/\Rr$ is the set of equivalence classes of vectors in $\Rr^d$ whose components differ by the same constant.
In $\Rr^d/\Rr$ we define  the norm
\begin{equation}
\label{sharpnorm}
\|\psi \|_{\#}:=\inf_{\lambda\in\re}\|\psi +\lambda \|,
\end{equation}
In this paper
we will regard $\Gg_\pi$, depending on what is convenient,  as both a map in $\Rr^d$ as well as a map in $\Rr^d/\Rr$.

We have the compact equivalent form for \eqref{mfg}
\begin{equation}
\label{mfgc}
\left\{
\begin{array}{rcl}
\displaystyle V^n&=&\Gg_{\pi^n}(V^{n+1})\\\\
\displaystyle \pi^{n+1}&=&\Kk_{V^{n+1}}(\pi^n).
\end{array}
\right.
\end{equation}
In this paper we will consider solutions to \eqref{mfgc} which satisfy initial-terminal value conditions, $\pi^0$ (or $\pi^{-N}$) and $V^N$,
as well as stationary solutions, that we discuss in what follows.

\begin{definition}
A pair of vectors $(\bar \pi, \bar V)$ is a stationary solution to the mean field game if there exists a constant $\bar \lambda$, called critical value, such that
\begin{equation}
\label{stat}
\left\{
\begin{array}{rcl}
\displaystyle \bar V&=&\Gg_{\bar \pi}(\bar V)+\bar \lambda\\\\
\displaystyle \bar \pi&=&\Kk_{\bar V}(\bar \pi).
\end{array}
\right.
\end{equation}
\end{definition}

We should remark that the first equation in \eqref{stat} can be written in $\Rr^d/\Rr$ as $\Gg_{\pi}(\bar V)=\bar V$. Therefore,
solutions to \eqref{stat} can be regarded as fixed points of $(\Gg_\pi, \Kk_V)$ in $\Rr^d/\Rr\times \Sss$.




The structure of the paper is as follows:
we first start, in \S \ref{mh} by listing our main assumptions, as well as explaining where they are needed in the paper.
In \S \ref{ttm} we address the issue of existence, theorem
\ref{existgeneral},
 and uniqueness, theorem \ref{teoA}, of
Nash-minimizing transition matrices.
Some general properties of the operator $\Gg$ are studied in \S\ref{gprop}.
In \S \ref{statsol} we establish several results concerning the existence,
theorem \ref{teoB}, and uniqueness, propositions \ref{ups}, \ref{ucv} and theorem \ref{teoC}, of stationary solutions to mean field games.
The initial-terminal value problem is studied in \S \ref{esmfg}. We show also existence, theorem \ref{teoD}, and uniqueness, theorem \ref{teoE},
of solutions for this problem.
Both in the stationary and initial-terminal value
problem the uniqueness proofs use a version of the mononicity argument of Lasry and Lions in \cite{ll1, ll2, ll3}.
In the last section we address the convergence to equilibrium and establish one of the main
results of the paper, theorem \ref{teoF}, which states that, as we take the initial and terminal conditions far appart ($n=\pm N$), the solutions at $n=0$
converge exponentially to a stationary solution.
Throughout the paper we discuss with detail the entropy penalized problem
For this model we give an independent proof of existence of stationary solutions, proposition \ref{sep} and
study the large entropy limit, proposition \ref{mainproA}.
Another important example is the optimal stationary solutions, discussed in
\S \ref{oss2} which give
a variational interpretation of a solution of certain mean field games in terms of non-linear programming problems, proposition
\ref{mainproB}.

\section{Main Assumptions}
\label{mh}

In this section, for the convenience of the reader, we list the main assumptions that will be needed in the text.
We list here only the assumptions that will be used in the main results or the ones which are
repeatedly used in the text.
A few other assumptions will be introduced later in the text and will only be used "locally" in the section they are stated.

The first two assumptions will be used in theorem \ref{existgeneral}, \S\ref{tgc},  to establish existence of a Nash minimizer of $e$.
\begin{hypothesis}
\label{hp1}
For each $\pi\in \Sss$, $V\in \Rr^d$, $P\in \Sss^d$, and each index $1\leq i\leq d$,
the mapping 
$q\mapsto e_i( \pi, \Pp(P,q,i) , V)$,
defined for $q\in\mathbb{S}$, and taking values on $\mathbb{R}$, is
convex.
\end{hypothesis}
\begin{hypothesis}
\label{hp2}
The map $P\mapsto e_i(\pi,P,V)$ is
continuous for all $i$.
\end{hypothesis}

Concerning the uniqueness of Nash minimizers, addressed in \S\ref{dcc}, we need the following definition (see \cite{Haurie}
for the motivation of this assumption):
\begin{definition}
A function $g:\Rr^{d\times d} \to \Rr^{d\times d}$ is diagonally convex
if for all $P^1, P^2\in \Rr^d$, $P^1\neq P^2$, we have
\[
\sum_{ij} (P^1_{ij}-P^2_{ij}) (g_{ij}(P^1) -g_{ij}(P^2)) >0.
\]
\end{definition}
With this definition we can state the next assumption:
\begin{hypothesis}
\label{hp3}
Let
\[
g_{ij}(P)=\frac{\partial e_i(\pi, P, V)}{\partial P_{ij}} \,.
\]
Then $g$ is diagonally convex.
\end{hypothesis}

Since diagonal convexity may not be the unique way to ensure uniqueness of Nash minimizers,
it is convenient to add  uniqueness as an assumption, which obviously holds under assumptions \ref{hp1}-\ref{hp3}, but which can also hold
under other alternative hypothesis.
\begin{hypothesis}
\label{hp4}
For  each $(\pi, V)$ there exists a unique Nash minimizer $P(\pi, V)$
of $e(\pi, \cdot, V)$.
\end{hypothesis}
The uniqueness of $P(\pi, V)$ makes the operators $\Kk_V$ and $\Gg_\pi$ well defined. Therefore, from \S \ref{statsol} on we will always suppose
that assumption \ref{hp4} holds, even without explicit mention.

To establish continuity of $P(\pi, V)$, \S \ref{uac}, proposition \ref{cont}, we need:
\begin{hypothesis}
\label{hp5}
For each index $1\leq i\leq d$,  $e_i: \Sss\times \Sss^d\times \Rr^d\to \Rr$ is a continuous function.
\end{hypothesis}

Denote by $\rho_{i,i'}(P)$ the matrix we
obtain from $P$ by replacing its $i'$-th row by its $i$-th row, and leaving all other rows (including the $i$-th) unchanged.
\begin{hypothesis}
\label{hp6}
There exists $C>0$ such that
for all $i$ and $i'$,
and any $\pi\in \Sss$, $P\in \Sss^d$
\begin{equation}
\label{semiconcavity}
\sum_j \left|\,c_{ij}(\pi, P)-c_{i'j}(\pi, \rho_{i,i'}(P))\right|P_{ij}\leq C.
\end{equation}
\end{hypothesis}

Note that the previous assumption holds if $c_{ij}$ is bounded, for instance.

Some of the results of the paper only hold for transition costs which have a special dependence on $P$.
The next assumption will be required frequently:
\begin{hypothesis}
\label{hp7}
The cost $c_{ij}(\pi, P_{i\cdot})$ depends on $\pi$ and, for each $i$, only on the $i$-th line of P.
\end{hypothesis}

To establish uniqueness of solutions (\S \ref{uniq}, theorem \ref{ups}, \S \ref{usitvp}, theorem \ref{teoE}),
as well as to obtain exponential convergence to stationary solutions (\S\ref{cvte}, theorem \ref{teoF})
 it is convenient
to have the following assumption on the operator $\Gg$:
\begin{hypothesis}
\label{hp8}
There exists a constant $\gamma>0$ such that
\[
\tilde \pi \cdot (\Gg_{\tilde \pi}(V)-\Gg_{\pi}(V))
+\pi \cdot (\Gg_{\pi}(\tilde V)-\Gg_{\tilde \pi}(\tilde V))
\geq \gamma \|\pi-\tilde \pi\|^2,
\]
for any $V, \tilde V\in \Rr^d$ and all $\pi, \tilde \pi\in \Sss$.
\end{hypothesis}

An example where this last hypothesis is satisfied is the following:
\begin{equation}
\label{onlypi}
c_{ij}(\pi, P_{i\cdot})=W_i(\pi)+\tilde c_{ij}(P_{i\cdot}),
\end{equation}
where $W$  is a monotone function, that is,
\begin{equation}
\label{monotonew}
(\pi-\tilde \pi) \cdot (W(\pi)-W(\tilde \pi)) \geq \gamma \|\pi-\tilde \pi\|^2,
\end{equation}
where $\gamma$ is a positive constant. For instance, the gradient of a convex function is a monotone function.
In this case we have
\[ \Gg_{\pi}(V)(i)=  
W_i(\pi) + \min_{P_{ij}} \sum_j (\tilde c_{ij}(P_{i\cdot})+V_j)P_{ij}
\,.\]
The special structure in \eqref{onlypi} arises naturally in certain problems, see \S \ref{oss2}.

To establish the uniqueness of the critical value we need the following assumption:
\begin{hypothesis}
\label{hp9}
For any $\pi\in \Sss$, the operator
$V\mapsto \Gg_\pi(V)$ satisfies the following property:
for all $V^1$, $V^2$ and any $i\in \argmax (V^1-V^2)$ we have
\[
\Gg_\pi(V^1)_i-V^1_i\leq \Gg_\pi(V^2)_i-V^2_i,
\]
with the opposite inequality if $i\in \argmin(V^1-V^2)$.
\end{hypothesis}
As it will be proved in \S\ref{gmonot}, proposition \ref{monotoneprop}, assumption \ref{hp7} implies assumption \ref{hp9}. However, we leave it
explicit to make easier the understanding of what follows.


The following strict concavity of $\Gg$ is important to establish uniqueness of stationary solutions and the exponential convergence to equilibrium.
\begin{hypothesis}
\label{hp10}
For all $\pi\in \Sss$ and all $V^1, V^2\in \Rr^d$ we have
\[
\pi \cdot (\Gg_{\pi}(V^2)-\Gg_{\pi}(V^1))+\Kk_{V^1}(\pi) (V^1-V^2)\leq -\gamma_{\pi} \|V^1-V^2\|^2_\#.
\]
\end{hypothesis}
This last assumption is a slightly stronger version of inequality \eqref{kav3}, which is a consequence of
assumptions \ref{hp4} and \ref{hp7}.

A final hypothesis allow us to establish certain bounds (lemma \ref{bound} in \S \ref{apb2}) which are useful in proving the  exponential convergence to equilibrium:
\begin{hypothesis}
\label{hp11}
There exists $K>0$ such that
for  all $\pi,\tilde \pi\in \Sss$, and for any matrix  $P\in \Sss^d$
\begin{equation}\label{lema_conveq_hip}\left|\,c_{ij}(\pi, P)-c_{ij}(\tilde\pi,P)\right|\leq K.\end{equation}
\end{hypothesis}
Note that the previous assumption holds if $c_{ij}$ is bounded, for instance.

\section{The Transition Matrix}
\label{ttm}

In this section we discuss the problem of existence, uniqueness and continuity
of the Nash equilibrium transition matrix $P_{}$.
As we will see, this problem is non-trivial and requires, in the general case,
the use of Kakutani's fixed point theorem, see theorem \ref{existgeneral} in \S \ref{tgc}.
Once existence is established,
uniqueness can be proven for a general class of cost functionals, theorem \ref{teoA} in \S \ref{dcc}.
We finish this section with the discussion of a special
 case: the entropy penalized model, \S \ref{epmd}.

\subsection{Existence in the general case}\label{tgc}

\begin{teo}\label{existgeneral}
Suppose that assumptions \ref{hp1} and \ref{hp2} hold.
Then, for any pair of vectors $\pi$ and $V$ there exists a Nash minimizer $P$ of $e(\pi, \cdot, V)$.
\end{teo}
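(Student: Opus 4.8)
The plan is to recognize a Nash minimizer as a fixed point of the simultaneous best-response correspondence of a $d$-player game, and then to produce such a fixed point via Kakutani's theorem. I would think of each row index $i\in\{1,\dots,d\}$ as a player whose strategy is the probability vector $P_{i\cdot}\in\Sss$, so that the joint strategy space is the product of simplices $\Sss^d$, a nonempty, compact and convex subset of $\Rr^{d\times d}$. For a fixed profile $P\in\Sss^d$ I define the best-response set of player $i$ by
\[
B_i(P)=\argmin_{q\in\Sss} e_i\bigl(\pi,\Pp(P,q,i),V\bigr),
\]
and set $B(P)=B_1(P)\times\cdots\times B_d(P)\subset\Sss^d$. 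Unravelling the definition, $P$ is a Nash minimizer of $e(\pi,\cdot,V)$ precisely when $P\in B(P)$, so it suffices to show that $B$ has a fixed point.

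First I would check that $B$ has nonempty, convex values. For each $i$ and each fixed $P$, the map $q\mapsto e_i(\pi,\Pp(P,q,i),V)$ is continuous on the compact set $\Sss$ by assumption \ref{hp2}, hence its minimum is attained and $B_i(P)\neq\emptyset$; consequently $B(P)\neq\emptyset$. By assumption \ref{hp1} this same map is convex, and the set of minimizers of a convex function over the convex set $\Sss$ is convex, so each $B_i(P)$ is convex and therefore so is the product $B(P)$.

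The step I expect to be the main obstacle is verifying that $B$ has closed graph (equivalently, is upper hemicontinuous), since this is the hypothesis of Kakutani's theorem that genuinely uses the continuity structure. Suppose $P^k\to P$ in $\Sss^d$ and $Q^k\in B(P^k)$ with $Q^k\to Q$. For each $i$ the defining inequality reads
\[
e_i\bigl(\pi,\Pp(P^k,Q^k_{i\cdot},i),V\bigr)\leq e_i\bigl(\pi,\Pp(P^k,q,i),V\bigr)\qquad\text{for all }q\in\Sss.
\]
Since the replacement operation $\Pp$ is continuous in its matrix and row arguments, $\Pp(P^k,Q^k_{i\cdot},i)\to\Pp(P,Q_{i\cdot},i)$ and $\Pp(P^k,q,i)\to\Pp(P,q,i)$; letting $k\to\infty$ and invoking the continuity of $e_i$ from assumption \ref{hp2} yields
\[
e_i\bigl(\pi,\Pp(P,Q_{i\cdot},i),V\bigr)\leq e_i\bigl(\pi,\Pp(P,q,i),V\bigr)\qquad\text{for all }q\in\Sss,
\]
so $Q_{i\cdot}\in B_i(P)$ for every $i$, that is $Q\in B(P)$.

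With $\Sss^d$ nonempty, compact and convex, and $B\colon\Sss^d\to\Sss^d$ a correspondence with nonempty, convex values and closed graph, Kakutani's fixed point theorem provides a $P$ with $P\in B(P)$, which by the reformulation above is exactly the desired Nash minimizer. The argument is, in essence, the standard Kakutani proof of Nash's existence theorem adapted to the cost $e$.
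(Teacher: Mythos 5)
Your proof is correct and follows essentially the same route as the paper: the same best-response correspondence $F_i(P)=\argmin_{q\in\Sss}e_i(\pi,\Pp(P,q,i),V)$, the same verification of nonempty convex values and closed graph, and the same application of Kakutani's fixed point theorem. Your write-up is if anything slightly more explicit than the paper's (e.g.\ in justifying nonemptiness of the argmin via compactness and continuity).
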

\begin{proof}
Given a stochastic matrix $P$, define $F_i(P)$ to be the set of vectors in $\mathbb{S}$	given by
$$ F_i(P)=\argmin_{q\in\mathbb{S}} \;e_i(\pi,\Pp(P,q,i),V).$$
The set $F_i(P)$ is non-empty and convex.
Define $F(P) = F_1(P) \times F_2(P) \times ... \times F_d(P)$, where we
identify the cartesian product with the the set of all stochastic matrices where the $i$-th row belongs to $F_i(P)$.
Clearly, $F(P)$ is convex for all $P$.
Furthermore, as we argue next,
the graph $\{(P;F(P)) ; P \in \mathbb{S}^d\}$ is closed.
Indeed,
suppose that $ P^n\to P^0$ and take $Q^n\in F(P^n)$, if $Q^n\to Q^0$
we want to show that $Q^0\in F(P^0)$. Fix $i$ and call $q^n_i$
the i-th coordinate of $Q^n$, then  by hypothesis $e_i(\pi,\Pp(P^n,q^n_i,i),V)\leq e_i(\pi,\Pp(P^n,q',i),V)$ for all $q'\in
\mathbb S$.  As  $q^n_i\to q^0_i$ and  $ P^n\to P^0$
we have that $\Pp(P^n,q^n_i,i)\to \Pp(P^0,q^0_i,i)$  then   we get that
$e_i(\pi,\Pp(P^0,q^0_i,i),V)\leq e_i(\pi,\Pp(P^0,q',i),V)$ for all $q'\in
\mathbb S$.

Then,
because for each $P$, $F(P)$ is a convex set and the graph $(P,F(P))$ is closed, we can apply
Kakutani's fixed point theorem, which implies the existence of a matrix $P$ that belongs to $F(P)$. Thus $P$ is a Nash minimizer of
$e(\pi, \cdot, V)$.
\end{proof}



\subsection{Uniqueness for diagonally convex costs}\label{dcc}

As shown in the previous section, if assumptions \ref{hp1} and \ref{hp2} hold,
for each $\pi$ and $V$ there exists a transition matrix $P$ which is a Nash minimizer of $e(\pi, \cdot, V)$.
In general, such minimizer may fail to be unique.
Under the diagonally convex assumption \ref{hp3} we will show uniqueness.

\begin{teo}
\label{teoA}
Suppose assumptions \ref{hp1}-\ref{hp3} hold.
Then there exists a unique transition matrix $P$ which is a Nash minimizer of $e(\pi, \cdot, V)$.
\end{teo}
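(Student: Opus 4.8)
Since Theorem \ref{existgeneral} already furnishes a Nash minimizer under assumptions \ref{hp1} and \ref{hp2}, the plan is to prove only the uniqueness part, which is exactly where the diagonal convexity hypothesis \ref{hp3} enters. The strategy is the classical variational-inequality (monotonicity) argument for uniqueness of Nash equilibria, in the spirit of Rosen's uniqueness theorem for concave games: I would argue by contradiction, assuming that $P^1$ and $P^2$ are two distinct Nash minimizers of $e(\pi,\cdot,V)$, extract a first-order optimality condition for each minimizer row by row, add the resulting inequalities, and show that the sum contradicts the strict sign imposed by \ref{hp3}.

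The first step is to convert the Nash condition into a variational inequality. Fix an index $i$. By definition of a Nash minimizer, the $i$-th row $P^1_{i\cdot}$ minimizes the map $q\mapsto e_i(\pi,\Pp(P^1,q,i),V)$ over $q\in\Sss$, and by assumption \ref{hp1} this map is convex. The key computation is that, since altering a single coordinate $q_j$ changes only the $(i,j)$ entry of $\Pp(P^1,q,i)$, the partial derivative of this map in the $q_j$-direction, evaluated at $q=P^1_{i\cdot}$, equals precisely $g_{ij}(P^1)=\partial e_i/\partial P_{ij}(\pi,P^1,V)$. The first-order optimality condition for a convex function minimized over the convex set $\Sss$ then reads
\[
\sum_j g_{ij}(P^1)\,(q_j-P^1_{ij})\ge 0\qquad\text{for all }q\in\Sss .
\]
Taking $q=P^2_{i\cdot}$ and summing over $i$ gives $\sum_{ij} g_{ij}(P^1)(P^2_{ij}-P^1_{ij})\ge 0$, and the symmetric argument with the roles of $P^1$ and $P^2$ interchanged gives $\sum_{ij} g_{ij}(P^2)(P^1_{ij}-P^2_{ij})\ge 0$.

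Adding these two inequalities and rearranging yields
\[
\sum_{ij}(P^1_{ij}-P^2_{ij})\bigl(g_{ij}(P^1)-g_{ij}(P^2)\bigr)\le 0 .
\]
Since $P^1\neq P^2$, the diagonal convexity assumption \ref{hp3} forces the left-hand side to be strictly positive, a contradiction; hence $P^1=P^2$. The only genuinely delicate step is the derivation of the variational inequality: one must justify differentiability of $q\mapsto e_i(\pi,\Pp(P,q,i),V)$ (implicit in the very definition of $g$ in \ref{hp3}) and verify that its gradient at the optimal row coincides with the diagonal derivatives $g_{ij}$, being careful that in the general case $c_{ij}$ may depend on the whole matrix $P$ — but since only the $i$-th row is perturbed, exactly the partials $\partial e_i/\partial P_{ij}$ appear. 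Once the variational inequality is in hand, the remaining algebra (summation and sign comparison) is routine.
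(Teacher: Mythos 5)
Your proposal is correct and rests on the same idea as the paper's proof: extract a row-wise first-order optimality condition for each Nash minimizer, pair the two minimizers against each other, and contradict the strict positivity demanded by the diagonal convexity assumption \ref{hp3}. The only difference is bookkeeping --- you phrase the optimality condition as the variational inequality $\sum_j g_{ij}(P^1)(q_j-P^1_{ij})\ge 0$ for $q\in\Sss$, whereas the paper writes out the KKT conditions with multipliers $\nu_i$, $\theta_{ij}$ and eliminates them via complementary slackness and $\sum_j P_{ij}=1$; these are equivalent, and both share the same implicit differentiability of $e_i$ in $P_{i\cdot}$ already built into the definition of $g$ in \ref{hp3}.
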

\begin{proof}
Note that if $P$ is any Nash minimizer of $e(\pi, \cdot, V)$, its $i$-th line solves the constrained optimization problem
\begin{align*}
\min_q &\ e_i(\pi,\Pp(P,q,i),V)\,,\\
q_i &\geq 0 \,,\\
\sum_i q_i&=1 \,.
\end{align*}

Thus, if $P^1$ and $P^2$ are two Nash minimizers, they  satisfy the KKT conditions \cite{Pedregal}
\[
\frac{\partial e_i(\pi, P^k, V)}{\partial P_{ij}}-\nu_i^k-\theta_{ij}^k=0,    \hspace{1cm} k=1,2.
\]
\[
\sum_j P_{ij}-1=0 \,,
\]
$$ \mbox{and } \theta_{ij} P_{ij}=0\,,$$
where $\nu_i$ is the Lagrange multiplier associated with $\sum_j P_{ij}=1$ and
$\theta_{ij}\geq 0$ corresponds to the constraint $P_{ij}\geq 0$.
From the first of the three equations above  we conclude that
\[
\sum_{ij} (P^1_{ij}-P^2_{ij}) (g_{ij}(P^1) -g_{ij}(P^2)-\nu_i^1+\nu_i^2-\theta_{ij}^1+\theta_{ij}^2)=0.
\]
Therefore, using the diagonally convex property, we have
\[
\sum_{ij} (P^1_{ij}-P^2_{ij}) (-\nu_i^1+\nu_i^2-\theta_{ij}^1+\theta_{ij}^2) <0,
\]
which implies, when we use $\theta_{ij}^k P_{ij}^k=0$, that
\[
\sum_{ij} \left( -P^1_{ij} \nu^1_i  - P^2_{ij} \nu^2_i  +   P^1_{ij} \nu_i^2+P^2_{ij} \nu_i^1+P_{ij}^1 \theta_{ij}^2+P_{ij}^2\theta_{ij}^1\right)<0.
\]
Now we can use that
\[
\sum_j P^k_{ij}\nu_i^l=\nu_i^l, \;\;\;\forall \,1\leq l,k \leq 2\,,
\]
to get
\[
\sum_{ij} \left( P_{ij}^1 \theta_{ij}^2+P_{ij}^2\theta_{ij}^1\right)<0.
\]
Since $P_{ij}^1 \theta_{ij}^2, P_{ij}^2 \theta_{ij}^1\geq 0$,
we obtain a contradiction.
\end{proof}

\subsection{Uniqueness and continuity}
\label{uac}

Suppose assumption \ref{hp4} holds. Consider the map
which associates to each pair $(\pi,V)$ its unique optimizing transition matrix $P(\pi, v)$. Is it natural to ask whether
this map is a continuous function. This is addressed in the next proposition.

\begin{pro}\label{cont}
Suppose assumptions \ref{hp4}-\ref{hp5} hold.
Then $P(\pi, V)$ is a continuous function of $\pi$ and $V$.
\end{pro}
\begin{proof}
Consider sequences $\pi_n\to \pi$ and $V_n\to V$. The corresponding sequence of Nash
minimizers $P_n=P(\pi_n, V_n)$ converges to a Nash minimizer, by the continuity of $e$, assumption \ref{hp5}.
Therefore, by the uniqueness hypothesis (assumption \ref{hp4})  $P(\pi_n, V_n)\to P(\pi, V)$.
\end{proof}


\subsection{The entropy penalized model}
\label{epmd}

Now we consider a special example, the entropy penalized model.
We fix a positive constant $\ep$, and consider assume that
$c_{ij}(\pi,P_{i\cdot}) = \tilde c_{ij}(\pi)+\ep \ln(P_{ij})$, where $\tilde c_{ij}$ is a continuous function of $\pi$.
For simplicity we will drop the $\sim$.
We have
\begin{equation}
\label{lpe}
e_i( \pi, P, V)=\sum_j P_{ij}\left(c_{ij}(\pi)+\epsilon \ln P_{ij}+V_j\right).
\end{equation}
The term $\ep P_{ij}\ln P_{ij}$, with $\ep>0$, is related to entropy and forces the agents to diversify their transition choices by enforcing a penalty
if they do not do so.

It is easy to prove that there exists a unique Nash minimizing transition matrix given by
\begin{equation}\label{sol_entropy}
P_{ij}(\pi,V)=\frac{e^{-\frac{c_{ij}(\pi)+V_j}{\ep}}}{\sum_k e^{-\frac{c_{ik}(\pi)+V_k}{\ep}}}\,.
\end{equation}
Also, this transition matrix is a continuous function of $\pi$ and $V$.

Now we present a useful formula for the second derivatives of $\Gg_\pi(V)_i$:
\[
\frac{\partial^2 \Gg_\pi(V)_i}{\partial V_k\partial V_l}=\frac{p_k p_l-p_k\delta_{kl}}{\epsilon}\equiv J_{kl},
\]
where
\begin{equation}
\label{pk1}
p_k=\frac{
e^{-\frac{c_{ik}(\pi)+V_k}{\epsilon}}}{\sum_m
e^{-\frac{c_{im}(\pi)+V_m}{\epsilon}}}.
\end{equation}
Because $\sum_{k} J_{kl}=0$, the matrix $J_{kl}$ has a zero eigenvalue, which is a reflection of the fact that $\Gg_\pi$ commutes with addition of
constants. As we show in the next proposition this eigenvalue is simple.
\begin{pro}
\label{simpleev}
Suppose $0<p_k<1$, $\sum_k p_k=1$ and let \[J_{kl}=\frac{p_k p_l-p_k\delta_{kl}}{\epsilon}.\]
Then $0$ has simple multiplicity.
\end{pro}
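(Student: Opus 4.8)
The plan is to exhibit $\epsilon J$ as a symmetric, negative semi-definite matrix whose associated quadratic form is (minus) a variance, so that its kernel is exactly the line of constant vectors. First I would write $J$ in the compact form $\epsilon J = p\,p^{T}-\diag(p)$, where $p=(p_1,\dots,p_d)^{T}$. Since the diagonal term $p_k\delta_{kl}$ is symmetric in the indices $k,l$, the matrix $J$ is symmetric; hence it is diagonalizable over $\Rr$ and, for each eigenvalue, its algebraic and geometric multiplicities coincide. Consequently it suffices to prove that the geometric multiplicity of the eigenvalue $0$ is one, i.e. that $\dim\ker J=1$.

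Next I would evaluate the quadratic form: for any $x=(x_1,\dots,x_d)\in\Rr^{d}$,
\[
\epsilon\, x^{T}J x=\Big(\sum_k p_k x_k\Big)^{2}-\sum_k p_k x_k^{2}.
\]
Because $p_k>0$ and $\sum_k p_k=1$, the vector $p$ is a genuine probability distribution, and the right-hand side equals $-\operatorname{Var}_p(x)\le 0$ by the Cauchy--Schwarz (Jensen) inequality; thus $J$ is negative semi-definite. Moreover, equality $x^{T}Jx=0$ holds precisely when $x$ is $p$-almost surely constant, and since every $p_k$ is strictly positive this forces $x_k$ to be independent of $k$. Hence $x^{T}Jx=0$ if and only if $x\in\operatorname{span}(\mathbf 1)$, with $\mathbf 1=(1,\dots,1)$.

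Finally I would upgrade this statement about the quadratic form to one about the kernel. The one step that genuinely needs care — and which I expect to be the main (if modest) obstacle — is passing from ``$x$ annihilates the quadratic form'' to ``$x$ lies in $\ker J$'': this is the standard fact that for a symmetric semi-definite matrix the two sets coincide. Concretely, writing $-\epsilon J=A^{T}A$ (possible since $-J\ge 0$), one has $\epsilon\,x^{T}Jx=-\|Ax\|^{2}$, so $x^{T}Jx=0\iff Ax=0\iff A^{T}Ax=0\iff Jx=0$. Combining this with the previous paragraph yields $\ker J=\operatorname{span}(\mathbf 1)$, which is one-dimensional; since $J$ is symmetric the algebraic multiplicity of $0$ equals this dimension, so $0$ is a simple eigenvalue, as claimed.
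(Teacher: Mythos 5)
Your proof is correct, but it takes a genuinely different route from the paper. The paper factors $J=\tfrac1\epsilon D(Q-I)$ with $D=\diag(p)$ and $Q=\mathbf 1 p^{T}$ a strictly positive stochastic matrix, cancels the invertible factor $D$, and invokes the Perron--Frobenius theorem to conclude that the eigenvalue $1$ of $Q$ (hence the eigenvalue $0$ of $J$) is simple. You instead exploit the symmetry of $\epsilon J=pp^{T}-\diag(p)$ and the variance identity $\epsilon\,x^{T}Jx=-\operatorname{Var}_p(x)$, identify the kernel with the constants, and use diagonalizability to equate geometric and algebraic multiplicity. Your argument is more elementary (no Perron--Frobenius) and is also more careful on one point the paper glosses over: the paper's phrase ``two linearly independent eigenvectors'' implicitly conflates algebraic and geometric multiplicity, which your appeal to symmetry justifies. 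Moreover, your computation yields strictly more than the statement: it shows $J$ is negative semi-definite with $x^{T}Jx<0$ for every non-constant $x$, which is exactly the definiteness on $\{\sum_k X_k=0\}$ that the paper later needs in \S\ref{scepm} to verify assumption \ref{hp10}. The paper's approach, in exchange, is shorter and makes visible the probabilistic structure ($Q$ is the rank-one transition matrix with rows equal to $p$). Both proofs are complete; yours could be spliced in as a self-contained alternative.
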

\begin{proof}
Observe that
\[
J=\frac 1 \epsilon D (Q-I),
\]
where
\[
D=\diag\{p_1, \hdots, p_n\}
\]
and
\[
Q_{kl}=p_l
\]
If $0$ is not a simple eigenvalue, it would mean that there exists $w$ and $v$ which are linearly independent eigenvectors corresponding to this
eingenvalue. But then $v$ and $w$ are eigenvectors of $Q$ corresponding to the eigenvalue $1$. But this contradicts Perron-Frobenius theorem
because the eigenvalue $1$ is a simple eigenvalue of $Q$.
\end{proof}



\section{Properties of $\Gg$}
\label{gprop}

In this section we discuss the main properties of the operator $\Gg$. In \S \ref{gmonot} we show that assumption \ref{hp9} is a consequence of assumptions \ref{hp4} and \ref{hp7}, and
in \S\ref{gconcav} we study concavity properties of  $\Gg$. A-priori bounds, essencial to establishing existence of stationary solutions
are considered in \S\ref{apb} and, finally, in \S\ref{scepm} we prove strict concavity of $\Gg$ for the entropy penalized model with two states.

\subsection{Assumption \ref{hp9}}
\label{gmonot}

\begin{pro}
\label{monotoneprop}
Suppose assumptions \ref{hp4} and \ref{hp7} hold. Then assumption \ref{hp9} holds.
\end{pro}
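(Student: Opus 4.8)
The plan is to exploit Assumption \ref{hp7} to rewrite $\Gg_\pi$ as a coordinatewise minimization, and then to run a standard suboptimal-competitor comparison. First I would observe that when $c_{ij}(\pi,P)$ depends only on the $i$-th row of $P$, the Nash minimization in the definition of $\Gg$ decouples across rows: replacing the $i$-th row by some $q$ changes only $e_i$, so the unique Nash minimizer $P(\pi,V)$ (which exists by Assumption \ref{hp4}) has its $i$-th row solving $\min_{q\in\Sss}\sum_j (c_{ij}(\pi,q)+V_j)q_j$, and consequently $\Gg_\pi(V)_i=\sum_j(c_{ij}(\pi,P_{i\cdot})+V_j)P_{ij}$ with $P=P(\pi,V)$. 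In other words, $\Gg_\pi$ is a genuine row-by-row minimum operator.

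Fix $V^1,V^2$ and write $P^1=P(\pi,V^1)$ and $P^2=P(\pi,V^2)$. The next step is the comparison. Since the $i$-th row of $P^2$ is feasible, though in general suboptimal, for the $V^1$ problem, optimality of $P^1$ gives $\Gg_\pi(V^1)_i\le \sum_j(c_{ij}(\pi,P^2_{i\cdot})+V^1_j)P^2_{ij}$. Adding and subtracting $V^2_j$ on the right-hand side, and using that $\sum_j(c_{ij}(\pi,P^2_{i\cdot})+V^2_j)P^2_{ij}=\Gg_\pi(V^2)_i$ exactly, I obtain $\Gg_\pi(V^1)_i-\Gg_\pi(V^2)_i\le \sum_j (V^1_j-V^2_j)P^2_{ij}$.

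Finally I would invoke the extremal hypothesis. For $i\in\argmax(V^1-V^2)$ we have $V^1_j-V^2_j\le V^1_i-V^2_i$ for every $j$; since $P^2_{ij}\ge 0$ and $\sum_j P^2_{ij}=1$, the weighted average on the right is bounded above by $V^1_i-V^2_i$, giving $\Gg_\pi(V^1)_i-\Gg_\pi(V^2)_i\le V^1_i-V^2_i$, which rearranges to the claimed $\Gg_\pi(V^1)_i-V^1_i\le \Gg_\pi(V^2)_i-V^2_i$. For $i\in\argmin(V^1-V^2)$ I would run the mirror-image argument, now testing the optimality of $P^2$ against the competitor $P^1$ and using that $V^2_j-V^1_j\le V^2_i-V^1_i$ for all $j$, to obtain the reversed inequality.

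The only genuinely delicate point is the first step: verifying that Assumption \ref{hp7} really lets the minimization decouple, so that $\Gg_\pi(V)_i$ is the value of an honest row-by-row minimum problem and the minimizing row coincides with the $i$-th row of the global Nash minimizer $P(\pi,V)$. Once that reduction is in place, the remainder is just the familiar observation that a min-type operator behaves like a weighted average of the $V^j$ at its extremal coordinates, and both inequalities follow with no further work.
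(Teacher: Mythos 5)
Your proof is correct and follows essentially the same route as the paper: both arguments use the row of the Nash minimizer for one value function as a suboptimal competitor for the other, then exploit that at an $\argmax$ (resp.\ $\argmin$) coordinate the weighted average $\sum_j (V^1_j-V^2_j)P_{ij}$ is dominated by $V^1_i-V^2_i$. The only cosmetic difference is that the paper first normalizes so that $V^1_i=V^2_i$ (making the comparison read $V^1_j\le V^2_j$ for all $j$), whereas you keep the general form and bound the weighted average directly; these are equivalent.
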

\begin{proof}
Let $V^k\in \Rr^d$, $k=1,2$. Let $i\in \argmax V^1-V^2$. By adding a constant, we may assume that $V^1_i=V^2_i$, and so because $\Gg$ commutes
with the addition of constants, it suffices to check that
\[
\Gg_\pi(V^1)_i\leq \Gg_\pi(V^2)_i.
\]
Because $i$ is a maximizer of $V^1_j-V^2_j$, for all $j$ we have
$V^1_j-V^2_j\leq 0$, that is $V^1_j\leq V^2_j$.
Let $P_{ij}^2$ be such that
\[
\Gg_\pi(V^2)_i=\sum_j c_{ij}(\pi, P_{i\cdot}^2) P_{ij}^2+ V^2_k P_{ij}^2.
\]
Then
\[
\Gg_\pi(V^1)_i\leq \sum_j c_{ij}(\pi, P_{i\cdot}^2) P_{ij}^2+ V^1_j P_{ij}^2\leq \sum_j c_{ij}(\pi, P_{i\cdot}^2) P_{ij}^2+ V^2_j P_{ij}^2
=\Gg_\pi(V^2)_i.
\]
Arguing similarly we obtain the opposite inequality when $i\in \argmin V^1-V^2$.
\end{proof}

\subsection{Concavity}
\label{gconcav}

If assumption \ref{hp7} holds, for each fixed index $i$ the mapping
\[
V\mapsto \Gg_{\pi}(V)_i
\]
is concave since
\begin{equation}
\label{mnq}
\Gg_{\pi}(V)_i=  \min_{P_{i\cdot}\in \mathbb{S}} \sum_j c_{ij}(\pi, P_{i\cdot})P_{ij}+
V_j P_{ij},
\end{equation}
is a pointwise minimum of linear functions of $V$.
Furthermore, since $\pi\geq 0$,
\[
V\mapsto \pi\cdot \Gg_{\pi}(V)
\]
is also concave.

Suppose that
$P(\pi,V)$,  the transition matrix that realizes the minimum in \eqref{mnq}, 
 is differentiable with respect to $V$. We will use the notation $P^{\pi, V}=P(\pi, V)$.
 Then
\[
\frac{\partial \Gg_{\pi}(V)_i}{\partial V_j}=P_{ij}^{\pi, V}.
\]
In general, however, $P^{\pi, V}$ is may not be differentiable.
Nevertheless,
we have the following rigorous statement:
\begin{pro}\label{concav}
Suppose assumptions \ref{hp4} and \ref{hp7} hold.
Then
\begin{itemize}
\item[(a)] If $V^1$ and $V^2$ are any two given vectors, we have
\begin{equation}
\label{kav2}
\Gg_\pi(V^2)_i\leq \Gg_\pi(V^1)_i+\sum_jP_{ij}^{V^1,\pi} (V^2-V^1)_j.
\end{equation}
\item[(b)] If $V^2 \geq V^1$, then
\[
0 \leq \Gg_\pi(V^2)_i- \Gg_\pi(V^1)_i \leq \sum_jP_{ij}^{V^1,\pi} (V^2-V^1)_j.
\]
\end{itemize}
\end{pro}
\begin{proof}
(a) Observe that
\[
\Gg_\pi(V^2)_i\leq \sum_j c_{ij}(\pi, P_{i\cdot}^{V^1,\pi})P_{ij}^{V^1,\pi}+ V^2_j P_{ij}^{V^1,\pi}=\Gg_\pi(V^1)_i+\sum_j P_{ij}^{V^1,\pi} ( V^2-V^1)_j.
\]
(b) We just need to prove the first inequality. For this, note that
\begin{align*}
\Gg_\pi(V^2)_i & = \sum_j P_{ij}^{V^2,\pi}(V_j^2+c_{ij}(\pi,P_{i\cdot}^{V_2,\pi}))  \\
&\geq
\sum_j P_{ij}^{V^2,\pi}(V_j^1+c_{ij}(\pi,P_{i\cdot}^{V_2,\pi}))
\geq \Gg_\pi(V^1)_i.
\end{align*}
\end{proof}

Note that, by multiplying \eqref{kav2} by $\pi_i$ and adding, we obtain
\begin{equation}
\label{kav3}
\pi\cdot \left(\Gg_\pi(V^2)-\Gg_\pi(V^1)\right) - (V^2-V^1)\cdot \Kk_{V^1}(\pi)\leq 0.
\end{equation}
%
%
Since $\Gg$ commutes with the addition of constants it is not possible to establish a strict concavity estimate like
\[
\pi\cdot (\Gg_{\pi}(V^2)-\Gg_{\pi}(V^1))+ (V^1-V^2)\cdot \Kk_{V^1}(\pi)\leq -\gamma_{\pi} \|V^1-V^2\|^2
\]
However, in certain cases it is possible to obtain the following strict concavity estimate:
\[
\pi \cdot (\Gg_{\pi}(V^2)-\Gg_{\pi}(V^1))+(V^1-V^2)\cdot \Kk_{V^1}(\pi) \leq -\gamma_{\pi} \|V^1-V^2\|^2_\#.
\]
In \S \ref{scepm} we will give an explicit example.

\subsection{A-priori bounds}
\label{apb}

In the next proposition we give some a-priori bounds which are essencial to establishing the existence of fixed points (theorem \ref{teoB}).

\begin{pro}
\label{prop8}
Suppose assumption \ref{hp6} holds.
Then
for any $(\pi,V)\in \mathbb{S}\times \Rr^d$ and all indices
$i, i'$ we have
\begin{equation}
\label{compest1}
|\Gg_\pi(V)_i-\Gg_\pi(V)_{i'}|\leq C.
\end{equation}
\end{pro}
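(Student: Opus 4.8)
The plan is to exploit the defining inequality of the Nash minimizer to compare the two rows $i$ and $i'$ of the optimal matrix, choosing a test row that makes assumption \ref{hp6} directly applicable. Write $\bar P=P(\pi,V)$ for the Nash minimizer, so that by definition $\Gg_\pi(V)_i=e_i(\pi,\bar P,V)=\sum_j\left(c_{ij}(\pi,\bar P)+V_j\right)\bar P_{ij}$, and analogously for the index $i'$. Since \eqref{compest1} is symmetric in $i$ and $i'$, it suffices to bound the difference $\Gg_\pi(V)_{i'}-\Gg_\pi(V)_i$ from above by $C$; the reverse bound follows by exchanging the two indices.

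First I would use the Nash condition for row $i'$, which permits replacing the $i'$-th row of $\bar P$ by any $q\in\Sss$ without decreasing $e_{i'}$. The crucial observation is to take $q=\bar P_{i\cdot}$, the $i$-th row of $\bar P$: the perturbed matrix $\Pp(\bar P,\bar P_{i\cdot},i')$ is then exactly $\rho_{i,i'}(\bar P)$, the matrix featuring in \eqref{semiconcavity}. This yields
\[
\Gg_\pi(V)_{i'}\leq e_{i'}\!\left(\pi,\rho_{i,i'}(\bar P),V\right)=\sum_j\left(c_{i'j}(\pi,\rho_{i,i'}(\bar P))+V_j\right)\bar P_{ij},
\]
where I have used that the $i'$-th row of $\rho_{i,i'}(\bar P)$ equals $\bar P_{i\cdot}$, so that the transition weights appearing in this expression are precisely the $\bar P_{ij}$ and not the $\bar P_{i'j}$.

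The next step is to subtract $\Gg_\pi(V)_i=\sum_j\left(c_{ij}(\pi,\bar P)+V_j\right)\bar P_{ij}$. Because both quantities now carry the identical weights $\bar P_{ij}$, the terms involving $V_j$ cancel exactly, leaving
\[
\Gg_\pi(V)_{i'}-\Gg_\pi(V)_i\leq\sum_j\left(c_{i'j}(\pi,\rho_{i,i'}(\bar P))-c_{ij}(\pi,\bar P)\right)\bar P_{ij}\leq\sum_j\left|c_{ij}(\pi,\bar P)-c_{i'j}(\pi,\rho_{i,i'}(\bar P))\right|\bar P_{ij}\leq C,
\]
the last inequality being assumption \ref{hp6}. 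Swapping $i$ and $i'$ gives the matching lower bound, and together these establish \eqref{compest1}.

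I do not expect a serious obstacle: the entire content is concentrated in the single, slightly delicate, choice of test row. The one point to get right is the bookkeeping that identifies $\Pp(\bar P,\bar P_{i\cdot},i')$ with $\rho_{i,i'}(\bar P)$ and confirms that the associated weights are $\bar P_{ij}$, so that the expression matches the left-hand side of \eqref{semiconcavity}; once this is aligned, the cancellation of the $V$-terms and the final estimate are immediate, and the symmetry argument for the absolute value requires no new idea.
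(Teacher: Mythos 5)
Your proposal is correct and follows essentially the same route as the paper's own proof: both compare $\Gg_\pi(V)_{i'}$ with the sub-optimal choice obtained by copying the $i$-th row of the Nash minimizer into the $i'$-th slot, observe that the $V_j$ terms cancel against $\Gg_\pi(V)_i$ because the weights are identically $\bar P_{ij}$, and then invoke assumption \ref{hp6} together with the $i\leftrightarrow i'$ symmetry. No differences worth noting.
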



\begin{proof}
Fix $\pi$ and $V$ let $(\Gg_\pi, \Kk_V)$ be as in \eqref{ggdef} and \eqref{kkdef}.
Let  $P$ be  the optimal transition matrix such that
\[
\mathcal G_{\pi}(V)_i=   \sum_j c_{ij}(\pi, P) P_{ij}+
V_j P_{ij}.
\]
Since replacing the $i'$-th line by the $i$-th line in $P$ yields a sub-optimal choice, we have
\[
\mathcal G_{\pi}(V)_{i'}\leq   \sum_j c_{i'j}(\pi,\rho_{i,i'}(P))P_{ij}+
V_j P_{ij}.
\]
Hence
$$\mathcal G_{\pi}(V)_{i'}-\mathcal G_{\pi}(V)_i\leq   \sum_j
[-c_{ij}(\pi,P)+c_{i'j}(\pi,\rho_{i,i'}(P))]P_{ij}.  $$
If we exchange the role of $i$ and $i'$ we have the desired estimate.
\end{proof}

\subsection{Strict concavity for the entropy penalized model}
\label{scepm}


To show that the entropy penalized model satisfies the strict concavity property of assumption \ref{hp10}, it suffices
to show that the restriction of the linear form given by the matrix $D^2_{V^2} \Gg_{\pi}(V)_i$ to the space of
vectors $X\in \Rr^d$ with $\sum_k X_k=0$ is uniformly definite positive for each $i$. This holds because of proposition
\ref{simpleev}, which states that the eigenvalue $0$ is simple, and the corresponding eigenvector is $Y=(1, \hdots, 1)$.
The uniformity follows
from the a-priori bounds in the previous section, which allow us to assume that $\|V\|_\#$ is bounded. In fact (uniform) strict concavity only holds
for bounded $\|V\|_\#$. However, for the purposes of this paper this is enough because of the a-priori bounds in \S \ref{apb}. 
Thus if $c_{ij}$ is bounded and $\|V\|_\#$ is bounded we have $0<p_k<1$, and so assumption \ref{hp10} holds.





\section{Stationary Solutions}
\label{statsol}

In this section we study stationary solutions to mean field games.
After the characterization of the critical value, in \S \ref{rcv}, as the average cost for the population of agents,
we address the question of uniqueness of the critical
value $\bar \lambda$ for which \eqref{stat} admits a solution. In \S \ref{4p1} we give an example where $\bar \lambda$ is non-unique.
However, after addressing the issue of existence of stationary solutions (in \S \ref{essln}),
we revisit the uniqueness problem
in \S\ref{uniq} giving conditions which imply uniqueness $\bar \lambda$, $\bar \pi$ and $\bar V$.
These conditions are variations of the monotonicity conditions in \cite{ll1}.
The entropy penalized model is revisited
in \S\ref{epss}, and the large entropy limit is considered in \S\ref{lelmt}, where we establish uniqueness of stationary solution (proposition
\ref{mainproA}).
This uniqueness proof uses a strong contraction argument and is thus suitable for the numerical approximation of large entropy penalized mean
field games.
We end this section, in \S\ref{oss2},
with a discussion of optimal stationary solutions, where certain variational problems give rise to mean field games (see \cite{ll3}, \S 2.6,
for related problems).

\subsection{Representation of the critical value}
\label{rcv}

We will now give a representation formula for the critical value as the
average transition cost.

\begin{pro}
Suppose assumption \ref{hp4} holds.
Let $(\bar \pi, \bar V)$ be a stationary solution to \eqref{stat}, and $\bar \lambda$ the
corresponding critical value.
Let $\bar P$ be the optimal transition matrix.
Then
\[
\bar \lambda= \sum_{ij} \pi_i c_{ij}(\bar \pi,\bar P) \bar P_{ij}.
\]
\end{pro}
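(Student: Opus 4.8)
The plan is to pair the two equations of \eqref{stat} against each other, using $\bar\pi$ as the invariant distribution of $\bar P$, so that the value-function terms cancel and only the averaged transition cost survives. This is the discrete analogue of the classical ergodic identity, where integrating the Hamilton--Jacobi equation against the invariant measure recovers the ergodic constant.

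First I would write the first equation of \eqref{stat} componentwise. By the definition \eqref{ggdef} of $\Gg$, and since by assumption \ref{hp4} the \emph{same} optimal matrix $\bar P$ realizes both $\Gg_{\bar\pi}(\bar V)$ and the forward operator $\Kk_{\bar V}$, we have
\[
\bar V_i = \sum_j \left( c_{ij}(\bar\pi,\bar P) + \bar V_j \right)\bar P_{ij} + \bar\lambda, \qquad 1\le i\le d.
\]
Multiplying by $\bar\pi_i$, summing over $i$, and using that $\bar\pi\in\Sss$ so that $\sum_i\bar\pi_i = 1$, I obtain
\[
\sum_i \bar\pi_i \bar V_i = \sum_{ij}\bar\pi_i\, c_{ij}(\bar\pi,\bar P)\,\bar P_{ij} + \sum_{ij}\bar\pi_i \bar V_j \bar P_{ij} + \bar\lambda.
\]
The crux is to eliminate the two value-function sums. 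The second equation of \eqref{stat}, namely $\bar\pi=\Kk_{\bar V}(\bar\pi)=\bar\pi\bar P$ by \eqref{kkdef}, is precisely the invariance statement $\sum_i \bar\pi_i \bar P_{ij} = \bar\pi_j$. Substituting it into the middle term gives $\sum_{ij}\bar\pi_i \bar V_j \bar P_{ij} = \sum_j \bar\pi_j \bar V_j$, which coincides with the left-hand side, so these two sums cancel and the value function drops out entirely, leaving $\bar\lambda$ expressed through $\sum_{ij}\bar\pi_i c_{ij}(\bar\pi,\bar P)\bar P_{ij}$, the claimed representation as the average transition cost.

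There is essentially no hard step: once the two stationary equations are written out the result is a one-line pairing. The only points deserving care are that the matrix $\bar P$ entering $\Kk_{\bar V}$ is the same one realizing $\Gg_{\bar\pi}(\bar V)$ (guaranteed by the uniqueness of the Nash minimizer, assumption \ref{hp4}), and the invariance $\bar\pi\bar P=\bar\pi$ supplied by the second equation of \eqref{stat}; these are exactly what force the $\bar V$-terms to cancel. The one thing I would track carefully is the sign, which is fixed by the placement of $\bar\lambda$ in the definition \eqref{stat}.
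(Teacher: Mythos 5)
Your proof is correct and follows essentially the same route as the paper's: write the stationary Bellman equation componentwise, average against $\bar\pi$, and use the invariance $\bar\pi\bar P=\bar\pi$ to make the $\bar V$-terms cancel. The only caveat is the sign you flagged: with \eqref{stat} taken literally your computation actually yields $\bar\lambda=-\sum_{ij}\bar\pi_i c_{ij}(\bar\pi,\bar P)\bar P_{ij}$, whereas the stated formula corresponds to the convention $\Gg_{\bar\pi}(\bar V)=\bar V+\bar\lambda$ used in the paper's own proof and in theorem \ref{teoB}, so the discrepancy lies in the paper's inconsistent placement of $\bar\lambda$ rather than in your argument.
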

\begin{proof}
For each $1 \leq i \leq d$,
\begin{equation}\label{la}
\bar \lambda
=\sum_j c_{ij}(\bar \pi,\bar P)\bar P_{ij}-\left( \bar V_i  -\sum_j \bar P_{ij} \bar V_j \right).
\end{equation}
%
Note that $\sum_j c_{ij}(\bar \pi,\bar P)\bar P_{ij}$ can be seen as the expected cost of transition agents that are in state $i$ will have when moving to other states.
If we multiply \eqref{la} by $\bar \pi_i$ and add, for $1 \leq i \leq d$, we get
\[
\bar \lambda=\sum_{i,j} c_{ij}(\bar \pi,\bar P)\bar \pi_i \bar P_{ij}- \sum_{i,j}  \left(\bar V_i  - \bar V_j \right)
\bar \pi_i \bar P_{ij}.
\]
Let
$\mu^{\bar \pi \bar P}$ denote the probability measure on the set $\{1,2,...,d\}^2$ given by
$\mu^{\bar \pi \bar P}_{ij}=\pi_i \bar P_{ij}$.
Since
$\bar \pi =\bar \pi \bar P$, we have
\[\sum_i\mu^{\bar \pi \bar P}_{ij}=\sum_i\mu^{\bar \pi \bar P}_{ji}=\pi_j.
 \]
Therefore
 \[
 \sum_{i,j}  \left(\bar V_i  - \bar V_j \right)
\bar \pi_i \bar P_{ij}=\sum_{i,j}  \left(\bar V_i  - \bar V_j \right) \mu^{\bar \pi \bar P}_{ij}=0.
 \]
 So
 \[
 \bar \lambda=
\sum c_{ij}(\bar \pi,\bar P)\bar \mu^{\bar \pi \bar P}_{ij}.
\]
\end{proof}


\subsection{Non-uniqueness of the critical value}
\label{4p1}

In this section we show that the critical value may not be unique. Consider the following example: $c_{ij}(\pi)$ given by
\[
c_{12}=c_{21}=100,
\]
and
\[
c_{11}(\pi^\theta)=c_{22}(\pi^\theta)=\theta.
\]
where $\pi^\theta=(\theta, 1-\theta)$, for $0\leq \theta\leq 1$. Then $V^\theta=(0,0)$, $\lambda^\theta=\theta$, $\pi^\theta$ and
\[
P^\theta=
\left[
\begin{array}{cc}
1&0\\
0&1
\end{array}
\right]
\]
is a stationary solution.

\subsection{Existence of stationary solutions}
\label{essln}


\begin{teo}
\label{teoB}
Suppose that assumptions \ref{hp4}-\ref{hp5} hold.
Assume further that there exists $C$ such that for any $(\pi,V)\in \mathbb{S}\times \Rr^d$ and all indices
$i, i'$ we have \eqref{compest1}.
Then there exists a pair of vectors $(\bar \pi, \bar V)$, a constant $\bar \lambda$ and a transition matrix $\bar P$
such that for all $i$,
\[
\Gg_{\bar \pi}(\bar V)_i= \sum_j c_{ij}(\bar\pi,\bar  P)\bar P_{ij}+\bar V_j\bar P_{ij}=\bar V_i+\bar \lambda,
\]
and $\bar \pi =\bar \pi \bar P$.
\end{teo}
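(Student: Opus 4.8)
The plan is to recast the problem as a fixed point of a single continuous self-map of a compact convex set and to invoke Brouwer's theorem. The key point is that, under assumptions \ref{hp4} and \ref{hp5}, the Nash minimizer $\bar P=\bar P(\pi,V)$ is unique and depends continuously on $(\pi,V)$ by proposition \ref{cont}; hence the operators $\Gg_\pi$ and $\Kk_V$ from \eqref{ggdef} and \eqref{kkdef} are genuine single-valued continuous maps, and I can avoid set-valued fixed point theory altogether.

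First I would fix a normalization. Since $\Gg_\pi$ commutes with the addition of constants, the relevant variable is the class of $V$ in $\Rr^d/\Rr$, which I represent by the vector whose first coordinate vanishes. Using the a-priori bound \eqref{compest1} I set
\[ K=\{W\in\Rr^d \; ; \; W_1=0,\ |W_i|\le C \ \forall i\}, \]
a compact convex subset of $\Rr^d$, and work on the compact convex set $\Sss\times K$. I then define $\Psi:\Sss\times K\to \Sss\times K$ by
\[ \Psi(\pi,V)=\Big(\ \Kk_V(\pi)\ ,\ \big(\Gg_\pi(V)_i-\Gg_\pi(V)_1\big)_{i=1}^d\ \Big). \]
The first component $\Kk_V(\pi)=\pi\bar P(\pi,V)$ is a probability vector, hence lies in $\Sss$. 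The second component has vanishing first coordinate by construction, and by \eqref{compest1} each of its coordinates lies in $[-C,C]$; thus it lies in $K$. Consequently $\Psi$ maps $\Sss\times K$ into itself. (One also checks that $\bar P$, and therefore $\Kk_V$ and $\Gg_\pi$, depends on $V$ only through its class in $\Rr^d/\Rr$, so that using the representative with $W_1=0$ is consistent.)

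Next I would verify continuity of $\Psi$: continuity of $(\pi,V)\mapsto \bar P(\pi,V)$ is proposition \ref{cont} and continuity of $e$ is assumption \ref{hp5}, so $(\pi,V)\mapsto \Gg_\pi(V)$ and $(\pi,V)\mapsto \Kk_V(\pi)=\pi\bar P(\pi,V)$ are continuous, and subtracting the continuous scalar $\Gg_\pi(V)_1$ preserves continuity. Brouwer's fixed point theorem then yields $(\bar\pi,\bar V)\in\Sss\times K$ with $\Psi(\bar\pi,\bar V)=(\bar\pi,\bar V)$. The first coordinate gives $\bar\pi=\Kk_{\bar V}(\bar\pi)=\bar\pi\bar P$, with $\bar P=\bar P(\bar\pi,\bar V)$. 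The second gives $\bar V_i=\Gg_{\bar\pi}(\bar V)_i-\Gg_{\bar\pi}(\bar V)_1$ for all $i$; setting $\bar\lambda:=\Gg_{\bar\pi}(\bar V)_1$, a constant, this reads $\Gg_{\bar\pi}(\bar V)_i=\bar V_i+\bar\lambda$. Since $\bar P$ realizes the minimum defining $\Gg$, we have $\Gg_{\bar\pi}(\bar V)_i=\sum_j c_{ij}(\bar\pi,\bar P)\bar P_{ij}+\bar V_j\bar P_{ij}$, which is exactly the assertion.

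The main obstacle is the self-map property, namely guaranteeing that the value-function iterate does not escape to infinity: this is precisely what the a-priori oscillation bound \eqref{compest1} provides, and it is the reason that bound is hypothesized. The remaining points—consistency of the chosen representative with the quotient $\Rr^d/\Rr$ and continuity of the composite map—are routine once proposition \ref{cont} is in hand. Note that because uniqueness of the Nash minimizer (assumption \ref{hp4}) makes $\Psi$ single-valued, Brouwer suffices and no monotonicity of $\Gg$ is needed here; uniqueness of $(\bar\pi,\bar V,\bar\lambda)$ is a separate matter addressed later under the additional hypotheses.
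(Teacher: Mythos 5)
Your proof is correct and follows essentially the same route as the paper: Brouwer's fixed point theorem applied to the map $(\pi,V)\mapsto(\Kk_V(\pi),\Gg_\pi(V))$ on $\Sss\times(\Rr^d/\Rr)$, with compactness supplied by the a-priori bound \eqref{compest1} and continuity by Proposition \ref{cont}. The only difference is that you make explicit the normalization (representative with $W_1=0$) that turns the quotient into a compact convex set, a detail the paper leaves implicit.
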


Note that \eqref{compest1} will hold, by proposition \ref{prop8}, if we assume additionally assumption \ref{hp6}.

\begin{proof}
Since \ref{hp4}-\ref{hp5} hold, by proposition \ref{cont}, the optimal transition matrix $P(\pi, V)$ is a continuous function.
Therefore the operator $\mathcal G_{\pi}(\cdot):\Rr^d/\Rr\to \Rr^d/\Rr$ is continuous. Furthermore,
by estimate
\eqref{compest1}, $\|\Gg_{\pi}(V)\|_{\#}$
is uniformly bounded for any $V\in \Rr^d$.

Consider the mapping   $(\Gg_\pi(V), \Kk_V(\pi)):(\Rr^d/\Rr)\times \mathbb{S}\to (\Rr^d/\Rr)\times \mathbb{S}$.
By Brouwer's fixed point theorem this mapping has a fixed point, which is a stationary solution since
\[
\begin{cases}
\Gg_{\bar \pi}(\bar V)=\bar \lambda+\bar V\\
\Kk_{\bar V}(\bar \pi)=\bar \pi.
\end{cases}
\]
\end{proof}

\subsection{Uniqueness of stationary solutions}\label{uniq}

Now we address the problem of uniqueness of stationary solutions.
The results in this section use the monotonicity methods introduced in \cite{ll1, ll2, ll3} - in this
discrete setting, different versions of the hypothesis will yield several uniqueness results.
Under assumption \ref{hp8} we will prove, in proposition
\ref{ups}, the uniqueness of stationary distribution $\pi$. The uniqueness of critical value is established in proposition \ref{ucv},
using assumption \ref{hp9}, and finally under assumption \ref{hp10} we obtain the uniqueness of $\pi$, $V$ and $\lambda$
in theorem \ref{teoC}.

%



\begin{pro}\label{ups}
Suppose assumptions \ref{hp4}, \ref{hp7}, and \ref{hp8} hold. Let $(\pi^k, V^k)$ , $k=1,2$, be stationary solutions:
\[
\Kk_{V^k}(\pi^k)=\pi^k, \qquad \Gg_{\pi^k}(V^k)=\lambda^k+V^k\,
\]
where $\lambda^k$ are constants.
 Then $\pi^1=\pi^2$.
\end{pro}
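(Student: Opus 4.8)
The plan is to feed the two stationary solutions into the monotonicity inequality of assumption \ref{hp8} and show that its left-hand side is in fact nonpositive; since the right-hand side is $\gamma\|\pi^1-\pi^2\|^2\ge 0$, this will force $\pi^1=\pi^2$. Concretely, I would set $\pi=\pi^1$, $\tilde\pi=\pi^2$, $V=V^1$, $\tilde V=V^2$ in \ref{hp8}, obtaining
\[
\pi^2\cdot(\Gg_{\pi^2}(V^1)-\Gg_{\pi^1}(V^1))+\pi^1\cdot(\Gg_{\pi^1}(V^2)-\Gg_{\pi^2}(V^2))\ge \gamma\|\pi^1-\pi^2\|^2.
\]
The whole argument then reduces to bounding each of the two summands on the left from above.

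For the first summand I would use the stationarity relation $\Gg_{\pi^1}(V^1)=\lambda^1+V^1$ together with $\sum_i\pi^2_i=1$ to replace $\pi^2\cdot\Gg_{\pi^1}(V^1)$ by $\lambda^1+\pi^2\cdot V^1$. To control the remaining term $\pi^2\cdot\Gg_{\pi^2}(V^1)$, the key step is the concavity estimate \eqref{kav2} of proposition \ref{concav} (valid under assumptions \ref{hp4} and \ref{hp7}), applied with base point $V^2$: this gives $\Gg_{\pi^2}(V^1)_i\le \Gg_{\pi^2}(V^2)_i+\sum_j P_{ij}^{V^2,\pi^2}(V^1-V^2)_j$. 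Writing $P^2:=P(\pi^2,V^2)$ and using $\Gg_{\pi^2}(V^2)=\lambda^2+V^2$, I then multiply by $\pi^2_i$ and sum over $i$; the invariance $\pi^2=\pi^2P^2$ makes the linear $V$-terms telescope, since $\sum_{ij}\pi^2_iP^2_{ij}(V^1-V^2)_j=\pi^2\cdot(V^1-V^2)$. The outcome is $\pi^2\cdot\Gg_{\pi^2}(V^1)\le \lambda^2+\pi^2\cdot V^1$, whence the first summand is at most $\lambda^2-\lambda^1$.

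Running the identical argument with the roles of the two solutions interchanged bounds the second summand by $\lambda^1-\lambda^2$. Adding the two estimates, the critical values cancel, so the left-hand side of the displayed inequality is $\le 0$; combined with assumption \ref{hp8} this yields $\gamma\|\pi^1-\pi^2\|^2\le 0$, i.e. $\pi^1=\pi^2$. The main obstacle is precisely the middle step: one has to recognize that the correct tool is the one-sided concavity inequality \eqref{kav2} expanded around \emph{each solution's own} value function, and that the stationarity $\pi^k=\pi^kP^k$ is exactly what collapses the inner products of the transition matrices with $V^1-V^2$ into $\pi^k\cdot(V^1-V^2)$, producing the clean differences of critical values that cancel in the sum. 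Everything else is bookkeeping.
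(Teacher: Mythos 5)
Your proof is correct and uses essentially the same ingredients as the paper's: the monotonicity assumption \ref{hp8} applied to the pair of solutions, the one-sided concavity estimate \eqref{kav2} (equivalently \eqref{kav3}) from proposition \ref{concav}, and the stationarity relations $\pi^k=\pi^k P^k$ and $\Gg_{\pi^k}(V^k)=\lambda^k+V^k$. The only difference is organizational -- the paper first writes down a vanishing combination of the stationarity residuals and then splits it into three groups, whereas you start directly from \ref{hp8} and bound its left-hand side by $\lambda^2-\lambda^1$ and $\lambda^1-\lambda^2$; the algebra is identical.
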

\begin{proof}
From the hypothesis we have
\begin{align*}
0= &(V^1-V^2)\cdot (\Kk_{V^1}(\pi^1)-\pi^1-\Kk_{V_2}(\pi^2)+\pi^2)+\\&
+(\pi^1-\pi^2)((\Gg_{\pi^2}(V^2) -V^2)-(\Gg_{\pi^1}(V^1) -V^1))
+(\lambda^1-\lambda^2)\sum_{i}(\pi^1_i-\pi^2_i).
\end{align*}
Note that the last term vanishes since $\sum_{i}(\pi^1_i-\pi^2_i)=0$.
Rewriting we have
\begin{align*}
0=&\pi^1 \cdot (\Gg_{\pi^1}(V^2)-\Gg_{\pi^1}(V^1))+(V^1-V^2)\cdot \Kk_{V^1}(\pi^1) +\\&
+\pi^2 \cdot (\Gg_{\pi^2}(V^1)-\Gg_{\pi^2}(V^2))+(V^2-V^1)\cdot \Kk_{V^2}(\pi^2) +\\&
+\pi^1\cdot (\Gg_{\pi^2}(V^2)-\Gg_{\pi^1}(V^2))
+\pi^2\cdot (\Gg_{\pi^1}(V^1)-\Gg_{\pi^2}(V^1)).
\end{align*}
By proposition \ref{concav} and (\ref{kav3}), the first term above satisfies
\[
\pi^1\cdot(\Gg_{\pi^1}(V^2)-\Gg_{\pi^1}(V^1))+(V^1-V^2)\cdot \Kk_{V^1}(\pi^1) \leq 0,
\]
and similarly for the second term.

To analize the third term, observe that, by
assumption \ref{hp8}, we have
\[
\pi^1\cdot (\Gg_{\pi^2}(V^2)-\Gg_{\pi^1}(V^2))  +\pi^2\cdot (\Gg_{\pi^1}(V^1)-\Gg_{\pi^2}(V^1))  \leq -\gamma \|\pi^1- \pi^2\|^2.
\]
Therefore, the estimates above imply
$
\gamma \|\pi^1- \pi^2\|^2\leq 0
$.
\end{proof}

Now we establish the uniqueness of the critical value:
\begin{pro}
\label{ucv}
Suppose assumptions \ref{hp4}, \ref{hp7} and \ref{hp9} hold.
Let $\pi\in \Sss$, $\lambda^k\in \Rr$ and $V^k\in \Rr^d$, $k=1,2$ be solutions of
\[
\Gg_\pi(V^k)=\lambda^k+ V^k.
\]
Then $\lambda^1=\lambda^2$.
\end{pro}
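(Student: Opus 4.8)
The plan is to exploit the fact that, for a solution $\Gg_\pi(V^k)=\lambda^k+V^k$, the quantity $\Gg_\pi(V^k)_i-V^k_i$ equals the constant $\lambda^k$ for \emph{every} index $i$, independently of $i$. Assumption \ref{hp9} controls precisely how such a quantity behaves at the extreme indices of the difference $V^1-V^2$, so comparing the two constants at a maximizer and at a minimizer will squeeze $\lambda^1$ and $\lambda^2$ together from opposite sides.

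More precisely, I would first record that since $V^1-V^2\in\Rr^d$ lives in finite dimension, both $\argmax(V^1-V^2)$ and $\argmin(V^1-V^2)$ are nonempty. Picking $i\in\argmax(V^1-V^2)$ and applying the first inequality of assumption \ref{hp9} gives
\[
\lambda^1=\Gg_\pi(V^1)_i-V^1_i\leq \Gg_\pi(V^2)_i-V^2_i=\lambda^2.
\]
Then, picking $i'\in\argmin(V^1-V^2)$ and applying the opposite inequality of assumption \ref{hp9} yields
\[
\lambda^1=\Gg_\pi(V^1)_{i'}-V^1_{i'}\geq \Gg_\pi(V^2)_{i'}-V^2_{i'}=\lambda^2.
\]
Combining the two displayed inequalities forces $\lambda^1=\lambda^2$, which is the claim.

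I do not anticipate a genuine obstacle, since assumption \ref{hp9} is essentially tailored to this statement. The only points needing care are verifying that the argmax and argmin sets are nonempty (immediate in finite dimension) and observing that the defining equations make $\Gg_\pi(V^k)_i-V^k_i$ constant in $i$, which is exactly what allows me to read off $\lambda^1$ and $\lambda^2$ at whichever index the hypothesis selects. Note finally that assumptions \ref{hp4} and \ref{hp7} enter only to guarantee, via proposition \ref{monotoneprop}, that assumption \ref{hp9} is available; once \ref{hp9} is in force, the argument uses nothing further.
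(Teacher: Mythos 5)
Your proposal is correct and follows essentially the same argument as the paper: evaluate $\Gg_\pi(V^k)_i-V^k_i=\lambda^k$ at an index in $\argmax(V^1-V^2)$ to get $\lambda^1\leq\lambda^2$ via assumption \ref{hp9}, then at an index in $\argmin(V^1-V^2)$ for the reverse inequality. The only difference is that you make explicit the (trivial) nonemptiness of the argmax/argmin sets and the role of assumptions \ref{hp4} and \ref{hp7}, which the paper leaves implicit.
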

\begin{proof}
Choose $i\in \argmax V^1-V^2$. Then
\[
\lambda^1=\Gg_{\pi}(V^1)_i-V^1_i\leq \Gg_{\pi}(V^2)_i-V^2_i=\lambda^2.
\]
By choosing $i\in \argmin V^1-V^2$ we obtain the opposite inequality,  which then implies $\lambda^1=\lambda^2$.
\end{proof}

Therefore, under assumptions \ref{hp8} and \ref{hp9}, we have both uniqueness of the stationary distribution $\pi$ and critical value $\lambda$.
We now address the uniqueness of the stationary value function $V$.

\begin{teo}\label{teoC}
Suppose assumptions \ref{hp4}, \ref{hp7}, \ref{hp8} and \ref{hp10} hold.
Let $(\pi^k, V^k)$, $k=1,2$, be stationary solutions:
\[
\Kk_{V^k}(\pi^k)=\pi^k, \qquad \Gg_{\pi^k}(V^k)=\lambda^k+V^k\,
\]
where $\lambda^k$ are constants.
Then
\begin{itemize}
\item[(a)] $\pi^1=\pi^2$,
\item[(b)] $V^2=V^1+k$, where $k$ is a constant vector,
\item[(c)] $\lambda_1=\lambda_2$.
\end{itemize}
\end{teo}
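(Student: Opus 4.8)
The plan is to dispatch the three claims in the order (a), (c), (b), since each subsequent part uses the conclusion of the previous one. For (a), I would simply invoke Proposition \ref{ups}: its hypotheses are assumptions \ref{hp4}, \ref{hp7}, \ref{hp8}, all of which are among those assumed here, and its conclusion is exactly $\pi^1=\pi^2$. Write $\pi:=\pi^1=\pi^2$ for the common stationary distribution.

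For (c), observe that once $\pi^1=\pi^2=\pi$, the two value functions satisfy $\Gg_\pi(V^k)=\lambda^k+V^k$, $k=1,2$, with the \emph{same} underlying distribution $\pi$. This is precisely the setting of Proposition \ref{ucv}. That proposition is stated under assumption \ref{hp9}, which is not listed among the hypotheses of the theorem; however, Proposition \ref{monotoneprop} shows that assumptions \ref{hp4} and \ref{hp7} imply \ref{hp9}, so \ref{hp9} is available for free. Hence Proposition \ref{ucv} applies and gives $\lambda^1=\lambda^2=:\lambda$.

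The heart of the argument is (b), where assumption \ref{hp10} finally enters. I would apply the strict concavity inequality of assumption \ref{hp10} to the pair $V^1,V^2$ at the common distribution $\pi$:
\[
\pi\cdot(\Gg_\pi(V^2)-\Gg_\pi(V^1))+\Kk_{V^1}(\pi)\cdot(V^1-V^2)\leq -\gamma_\pi\,\|V^1-V^2\|^2_\#.
\]
The key observation is that the stationary relations make the entire left-hand side vanish. First, since $\Gg_\pi(V^k)=\lambda+V^k$ for both $k$ (using $\lambda^1=\lambda^2$ from part (c)), we get $\Gg_\pi(V^2)-\Gg_\pi(V^1)=V^2-V^1$, so the first term is $\pi\cdot(V^2-V^1)$. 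Second, since $(\pi,V^1)$ is a stationary solution we have $\Kk_{V^1}(\pi)=\pi$ (using part (a)), so the second term is $\pi\cdot(V^1-V^2)$. The two terms cancel, leaving $0\leq -\gamma_\pi\,\|V^1-V^2\|^2_\#$. As $\gamma_\pi>0$, this forces $\|V^1-V^2\|_\#=0$, which by the definition of the norm in \eqref{sharpnorm} means $V^1-V^2$ is a constant vector, i.e.\ $V^2=V^1+k$ with $k$ constant, establishing (b).

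The only subtlety, and the place where I would be careful, is precisely this bookkeeping that collapses the left-hand side of assumption \ref{hp10} to zero: the cancellation relies on both $\lambda^1=\lambda^2$ (hence (c) must come first) and $\Kk_{V^1}(\pi)=\pi$ (hence (a) must come first). Neither step is computationally hard, but the entire conclusion hinges on recognizing that the stationarity equations leave the strict concavity term $-\gamma_\pi\,\|V^1-V^2\|^2_\#$ with no slack to absorb, so that a strictly negative bound on zero can only hold if the $\#$-distance between the value functions is zero.
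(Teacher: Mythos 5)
Your proof is correct, but it organizes the argument differently from the paper. The paper runs the summed monotonicity identity from Proposition \ref{ups} once more, upgrading the two concavity terms via assumption \ref{hp10}, and thereby obtains (a) and (b) \emph{simultaneously} from the single inequality $0\leq -\gamma_1\|V^1-V^2\|_\#^2-\gamma_2\|V^1-V^2\|_\#^2-\gamma\|\pi^1-\pi^2\|^2$; it then deduces (c) \emph{from} (b) by the chain $V^2+\lambda^2=\Gg_\pi(V^2)=\Gg_\pi(V^1+k)=\Gg_\pi(V^1)+k=V^1+\lambda^1+k=V^2+\lambda^1$. You instead treat the three claims modularly: (a) by citing Proposition \ref{ups} as a black box, (c) by the entirely different max/min mechanism of Proposition \ref{ucv} (correctly noting that assumption \ref{hp9} comes for free from \ref{hp4} and \ref{hp7} via Proposition \ref{monotoneprop}), and only then (b) by a single localized application of assumption \ref{hp10} at the common $\pi$, where the stationarity relations collapse the left-hand side to zero. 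Your dependency order for (b) and (c) is thus the reverse of the paper's. Both routes are sound; yours is more modular and makes the role of \ref{hp10} transparent (it is used exactly once, for (b) alone), while the paper's is more economical in that one identity yields (a) and (b) at a stroke. A small remark: you do not actually need (c) before (b) --- applying \ref{hp10} with the roles of $V^1,V^2$ exchanged and adding the two inequalities cancels the $\lambda^2-\lambda^1$ terms and yields $0\leq -2\gamma_\pi\|V^1-V^2\|_\#^2$ directly --- but routing through (c) first, as you do, is perfectly valid.
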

\begin{proof}
If we follow the proof of Proposition \ref{ups}, we can use assumption \ref{hp10} to get
\[0= -\gamma_{1} \|V^1-V^2\|_{\#}^2 -\gamma_{2} \|V^1-V^2\|_{\#}^2 -\gamma \|\pi^1- \pi^2\|^2 \,.\]
This implies items (a) and (b).
 To get item (c), we observe that
 \[ V^2+\lambda^2= \Gg_\pi(V^2)= \Gg_\pi(V^1+k) = \Gg_\pi(V^1)+k = V^1+\lambda^1+k = V^2+\lambda^1 \,, \]
where $\pi = \pi_1=\pi_2$, in the first and fourth equalities we used $\Gg_{\pi}(V^k)=V^k+\lambda^k$\,, in the second and fifth we used
  item (b), 
   and in the third we used the fact that $\Gg_\pi$ commutes with constants.
\end{proof}

\subsection{Entropy penalized stationary solutions}\label{epss}

Now we consider
the entropy penalized model
We will present a simple proof of existence of solutions that relies on the special structure
of the problem.
A simple computation yields
\[
\Gg_{\pi}(V)_i=
-\ep \ln \left[\sum_{k}e^{-\frac{c_{ik}(\pi)+V_k}{\ep}   }   \right].
\]
We will suppose further:
\begin{hypothesis}
\label{hpep1}
The function $c_{ij}(\pi)$ is continuous.
\end{hypothesis}

\begin{pro}\label{sep}
Suppose assumption \ref{hpep1} holds.
Consider the entropy penalized model \eqref{lpe}. Then
there exists a pair of vectors $(\bar \pi, \bar V)\in \Sss\times \Rr^d$, a constant $\bar \lambda\in \Rr$
such that
\begin{align*}
\Gg_{\bar \pi}(\bar V)&=
\bar \lambda
+\bar V
\end{align*}
and $\bar \pi =\bar \pi \bar P=\Kk_{\bar V}(\bar \pi)$.
\end{pro}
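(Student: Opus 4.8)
The plan is to exploit the log-sum-exp structure of $\Gg_\pi$ for the entropy penalized model. The key observation is that the nonlinear eigenvalue equation $\Gg_{\bar\pi}(\bar V)=\bar\lambda+\bar V$ linearizes under the change of variables $u_i=e^{-\bar V_i/\ep}$, turning the stationary problem \emph{for a fixed} $\pi$ into a Perron--Frobenius eigenvalue problem. I would then close the loop on $\pi$ (i.e.\ enforce $\bar\pi=\bar\pi\bar P$) by a Brouwer fixed point argument on the simplex $\Sss$. This is exactly the ``independent proof relying on the special structure'' advertised in the introduction, bypassing the general existence theorem \ref{teoB}.

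First I would set $A_{ij}(\pi)=e^{-c_{ij}(\pi)/\ep}$, a matrix with strictly positive entries depending continuously on $\pi$ by Assumption \ref{hpep1}. Using $\Gg_\pi(V)_i=-\ep\ln\sum_k e^{-(c_{ik}(\pi)+V_k)/\ep}$, the equation $\Gg_\pi(V)_i=\lambda+V_i$ becomes, after exponentiating and writing $u_k=e^{-V_k/\ep}$ and $\beta=e^{-\lambda/\ep}$, the linear relation $(A(\pi)u)_i=\beta u_i$. Since recovering $V=-\ep\ln u$ forces $u>0$ componentwise, the relevant eigenpair is the Perron one. By the Perron--Frobenius theorem applied to the positive matrix $A(\pi)$, there is a unique eigenvalue $\beta(\pi)=\rho(A(\pi))>0$ admitting a strictly positive eigenvector; this eigenvalue is simple (the analogue of Proposition \ref{simpleev}), and it yields a right eigenvector $u(\pi)$ and a left eigenvector $w(\pi)$, both strictly positive, normalized say by $\sum_i u_i=\sum_i w_i=1$. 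Note that the induced transition matrix is insensitive to the normalization of $u$: by \eqref{sol_entropy}, $P_{ij}(\pi,V(\pi))=A_{ij}(\pi)u_j/(\beta(\pi)u_i)$, which is unchanged under $u\mapsto \alpha u$, so $P(\pi,V(\pi))$ is well defined.

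Next I would handle the equation $\bar\pi=\bar\pi\bar P$. The matrix $P(\pi,V(\pi))$ above is strictly positive, hence irreducible, so its invariant distribution is unique; a direct computation identifies it as $\pi_i\propto w_i(\pi)u_i(\pi)$. Indeed $\sum_i w_iu_i\cdot A_{ij}u_j/(\beta u_i)=(u_j/\beta)\sum_i w_iA_{ij}=(u_j/\beta)\,\beta w_j=u_jw_j$, using $wA=\beta w$. This suggests defining a self-map $\Phi:\Sss\to\Sss$ by $\Phi(\pi)_i=w_i(\pi)u_i(\pi)/\sum_k w_k(\pi)u_k(\pi)$. Since $\Sss$ is compact and convex and $\Phi$ is continuous, Brouwer's fixed point theorem produces $\bar\pi=\Phi(\bar\pi)$. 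Setting $\bar V_i=-\ep\ln u_i(\bar\pi)$, $\bar\lambda=-\ep\ln\beta(\bar\pi)$, and $\bar P=P(\bar\pi,\bar V)$ then satisfies both stationary equations, as required.

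The main obstacle --- essentially the only delicate point --- is the continuity in $\pi$ of the Perron eigenvalue $\beta(\pi)$ and of the normalized left and right Perron eigenvectors $w(\pi),u(\pi)$, which is what makes $\Phi$ continuous. This is precisely where simplicity of the Perron eigenvalue is essential: for a simple, isolated eigenvalue the associated eigenprojection depends analytically on the entries of $A(\pi)$, and since $A$ depends continuously on $\pi$ (Assumption \ref{hpep1}), restricting to the positive normalized ray gives continuous selections $u(\pi)$ and $w(\pi)$. Everything else --- positivity of $A$, the explicit invariant measure, and the Brouwer argument on the compact convex simplex --- is routine once this continuity is established.
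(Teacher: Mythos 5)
Your proof is correct and follows essentially the same route as the paper's: the exponential change of variables $u_k=e^{-V_k/\ep}$ turns the stationary Bellman equation into a Perron--Frobenius eigenvalue problem for the positive matrix $A_{ij}(\pi)=e^{-c_{ij}(\pi)/\ep}$, and Brouwer's fixed point theorem on $\Sss$ closes the loop in $\pi$. The only (harmless) difference is the choice of the self-map of the simplex: the paper iterates one step of the chain, $\pi\mapsto \pi P(\pi,V^{\pi})$, whose fixed points satisfy $\bar\pi=\bar\pi\bar P$ directly, whereas you send $\pi$ to the normalized invariant measure with components $w_i(\pi)u_i(\pi)$ of $P(\pi,V^{\pi})$ --- which additionally requires continuity of the left Perron eigenvector but yields the same conclusion.
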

\begin{proof}
%
%
%
Define the strictly positive linear operator that associates to each vector $\psi\in \Rr^d$ the vector
 $$\mathcal L_{\pi}(\psi)_i=\sum_{k}e^{-\frac{c_{ik}(\pi)}{\ep}  }\psi_k.    $$
Let $e^{-\frac{\lambda_{\pi}}{\ep}}$ be the largest eigenvalue of the operator $\mathcal L_{\pi}(\psi)$
and $\psi^{\pi}$ the unique normalized eigenvector associated to $e^{-\frac{\lambda_{\pi}}{\ep}}$, i.e.,
  $$
\mathcal L_{\pi}(\psi^{\pi})= e^{-\frac{\lambda_{\pi}}{\ep}}	 \psi^{\pi}.$$
By Perron-Frobenius Theorem, $\psi_{\pi}$ is a strictly positive vector which is
a continuous function of $\pi$.
We  can define $V^{\pi}$ as $ \psi^{\pi}_k=e^{-\frac{V^{\pi}_k}{\ep}}$.
Let  $\mathcal E[\phi]_j=e^{-\frac{\phi_j}{\ep}}$ be the exponential transformation.
These operators are related by
$$\mathcal L_{\pi}\circ \mathcal E= \mathcal E\circ \mathcal G_{\pi} \,.$$
Hence
 \begin{equation}\label{G}
 \mathcal G_{\pi} ( V_{\pi})= V_{\pi} + \lambda_{\pi}  \,.
\end{equation}

Define a new probability vector
$$ \Kk(\pi)_j=\sum_i\pi_iP_{ij}(\pi,V)=\sum_i\pi_i \frac{e^{-\frac{c_{ij}(\pi)+V^{\pi}_j}{\ep}}}{\sum_k e^{-\frac{c_{ik}(\pi)+V^{\pi}_k}{\ep}}} \,  $$
Thus we have defined a operator $\mathcal K:\mathbb S\to\mathbb S$ which is continuous.
By Brower's fixed point theorem, $\mathcal K$ has a fixed point $\bar\pi$.
Define $\bar V^{\bar\pi}$ and $\bar \lambda_{\bar\pi}$ as above, and $\bar P_{ij}=\frac{e^{-\frac{c_{ij}(\bar \pi)+\bar V^{\bar\pi}_j}{\ep}}}{\sum_k e^{-\frac{c_{ik}(\bar \pi)+\bar V^{\bar\pi}_k}{\ep}}}$, then (\ref{G}) holds.
\end{proof}


\subsection{Stationary Solutions with large entropy}
\label{lelmt}

The construction of fixed points for the entropy penalized model in
the last section depends on Brower's fixed point theorem. In the case of large entropy we can use a contraction argument to establish
the existence of a stationary solution. Before proving and stating this result we need an elementary lemma
\begin{lem}
\label{techlem}
Let $T:\Rr^n\times \Rr^n\to \Rr^n\times \Rr^n$ be a $C^1$ mapping. Suppose
\[
DT=
\left[
\begin{array}{cc}
E_1&M\\
E_2&E_3
\end{array}
\right].
\]
where $E_k$, $M$ are $n\times n$ matrices. If $\|M\|$ is bounded and $\|E_k\|$ is sufficiently small then
$T^2$ is a strong contraction in $\Sss^d \times \Rr^d/\Rr$.
\end{lem}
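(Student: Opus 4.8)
The plan is to differentiate $T^2$ by the chain rule and to exploit the block structure of $DT$: the point is that squaring forces the only uncontrolled block, $M$, to be multiplied by a small block $E_k$ in every entry, so that $\|D(T^2)\|$ becomes uniformly small. A fundamental-theorem-of-calculus estimate along segments then yields a Lipschitz constant strictly below one, which is exactly the asserted strong contraction.

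First I would compute $D(T^2)(x)=DT(T(x))\,DT(x)$. Writing the blocks of $DT$ evaluated at $T(x)$ with primes, the block product is
\[
D(T^2)=
\left[
\begin{array}{cc}
E_1'E_1+M'E_2 & E_1'M+M'E_3\\
E_2'E_1+E_3'E_2 & E_2'M+E_3'E_3
\end{array}
\right].
\]
The crucial observation is that the only factor which is not small, namely $M$ (respectively $M'$), never appears alone: in each of the four blocks every summand contains at least one factor of some $E_k$ (or $E_k'$). Setting $\epsilon=\max_k\sup\|E_k\|$ and $C=\sup\|M\|$, each summand is bounded either by $\epsilon^2$ or by $C\epsilon$, so each block has norm of order $C\epsilon+\epsilon^2$ and hence $\|D(T^2)\|\leq\kappa$ with $\kappa=\kappa(\epsilon,C)\to 0$ as $\epsilon\to 0$ for fixed $C$. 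In particular, once $\|E_k\|$ is sufficiently small, $\kappa<1$.

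Next I would pass from the derivative bound to the contraction estimate. Since both factors of the domain are convex, the segment $t\mapsto y+t(x-y)$ joining any two points of $\Sss^d\times\Rr^d/\Rr$ stays in the domain, and (using that $T$ preserves the domain, so that $T^2$ is $C^1$ along this segment)
\[
T^2(x)-T^2(y)=\int_0^1 D(T^2)\big(y+t(x-y)\big)\,(x-y)\,dt,
\]
whence $\|T^2(x)-T^2(y)\|\leq\kappa\,\|x-y\|$ with $\kappa<1$. This is precisely the statement that $T^2$ is a strong contraction.

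The main obstacle is the bookkeeping with norms rather than the estimate itself. The value-function factor lives in $\Rr^d/\Rr$ with the norm $\|\cdot\|_{\#}$, so I must check that $DT$ descends to a well-defined map on the quotient (which holds because $\Gg$ commutes with the addition of constants) and that the block-norm bounds are understood with respect to the corresponding quotient norm, so that the integration is carried out in the correct product norm. I also need the bounds on $\|M\|$ and $\|E_k\|$ to be uniform over the whole domain, including its image under $T$, rather than merely pointwise, so that $\kappa$ is a genuine constant independent of $x$ and $y$.
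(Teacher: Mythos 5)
Your proposal follows essentially the same route as the paper: compute $D(T^2)=DT(T(x))\,DT(x)$, expand the block product, and observe that every summand in every block carries at least one small factor $E_k$, so $\|D(T^2)\|<1$. You in fact supply more detail than the paper does (the mean-value/integral step passing from the derivative bound to the Lipschitz estimate, and the remarks on the quotient norm and uniformity), where the paper stops at ``therefore $\|D(T^2)\|<1$.''
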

\begin{proof}
It suffices to observe that
\[
D(T^2)(x)=DT(T(x)x) DT(x)
\]
can be written as
\[
D(T^2)=
\left[
\begin{array}{cc}
\tilde E_1&\tilde M\\
\tilde E_2&\tilde E_3
\end{array}
\right]
\left[
\begin{array}{cc}
E_1&M\\
E_2&E_3
\end{array}
\right]
=
\left[
\begin{array}{cc}
\tilde E_1 E_1+\tilde M E_2  &\tilde E_1 M+\tilde M E_3\\
\tilde E_2 E_1+\tilde E_3 E_2&\tilde E_2 M+\tilde E_3 E_3
\end{array}
\right],
\]
and therefore $\|D(T^2)\|<1$.
\end{proof}

To establish the main result in this section we need to replace assumption \ref{hpep1} by
\begin{hypothesis}
\label{hpep2}
The function $c_{ij}(\pi)$ is a $C^1$ function.
\end{hypothesis}

\begin{pro}
\label{mainproA}
Suppose assumption \ref{hpep2} holds.
%
Then, for large $\ep$, there is a unique stationary solution. Additionally, let
$T(\pi,V)=( \mathcal K_{V,\ep}(\pi),\Gg_{\pi}(V))$, then $T^2$
is a strong contraction.
\end{pro}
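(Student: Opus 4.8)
The plan is to verify the hypotheses of Lemma \ref{techlem} for the map $T(\pi,V)=(\Kk_V(\pi),\Gg_\pi(V))$ acting on $\Sss\times(\Rr^d/\Rr)$, conclude that $T^2$ is a strong contraction, and then read off existence and uniqueness of the stationary solution from the contraction mapping principle. The uniqueness statement requested in the proposition corresponds exactly to the uniqueness of the fixed point of $T$, while the additional claim that $T^2$ contracts is the technical core.

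First I would record that $T$ is well defined and $C^1$ on $\Sss\times(\Rr^d/\Rr)$. The explicit formula \eqref{sol_entropy} and the expression $\Gg_\pi(V)_i=-\ep\ln[\sum_k e^{-(c_{ik}(\pi)+V_k)/\ep}]$ are smooth in $(\pi,V)$ because $c_{ij}$ is $C^1$ (assumption \ref{hpep2}); moreover $P(\pi,V)$ is invariant and $\Gg_\pi(V)$ is equivariant under the addition of a constant to $V$, so $T$ genuinely descends to the quotient, and $\Kk_V(\pi)=\pi\bar P\in\Sss$. Writing $p_{ij}=P_{ij}(\pi,V)$, the four blocks of $DT$ are
\[
\frac{\partial \Gg_\pi(V)_i}{\partial V_j}=p_{ij},\qquad
\frac{\partial \Gg_\pi(V)_i}{\partial \pi_m}=\sum_k p_{ik}\,\partial_{\pi_m}c_{ik}(\pi),
\]
\[
\frac{\partial \Kk_V(\pi)_j}{\partial V_l}=\frac1\ep\sum_i\pi_i\,p_{ij}\,(p_{il}-\delta_{jl}),\qquad
\frac{\partial \Kk_V(\pi)_j}{\partial \pi_m}=p_{mj}+\sum_i\pi_i\,\partial_{\pi_m}p_{ij}.
\]

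The key observation is the behaviour as $\ep\to\infty$: provided $\|V\|_\#$ is controlled, $(c_{ik}(\pi)+V_k)/\ep\to0$ uniformly on the compact simplex, so $p_{ij}\to 1/d$ uniformly. I would then arrange the blocks with input order $(V,\pi)$ and output order $(\Gg,\Kk)$ so that Lemma \ref{techlem} applies with the bounded off-diagonal block $M=\partial_\pi\Gg$ and the small blocks $E_1=\partial_V\Gg$, $E_2=\partial_V\Kk$, $E_3=\partial_\pi\Kk$. Indeed $M=\sum_k p_{ik}\partial_{\pi_m}c_{ik}$ is bounded uniformly in $\ep$ since $c_{ij}$ is $C^1$ on a compact set; $E_2=\partial_V\Kk$ is $O(1/\ep)$ from the explicit prefactor above; $E_1=\partial_V\Gg=P$ has small norm as an operator on $\Rr^d/\Rr$ because all its rows converge to the uniform vector $(1/d,\dots,1/d)$, so the oscillation $\max_{i,i'}\sum_j|p_{ij}-p_{i'j}|$ that controls its induced norm on the quotient tends to $0$; and $E_3=\partial_\pi\Kk$, restricted to the tangent space $\{v:\sum_m v_m=0\}$ of $\Sss$, is small because its leading term $p_{mj}\to 1/d$ is independent of $m$ and therefore annihilates $v$, while the remaining term is $O(1/\ep)$.

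With $\|M\|$ bounded and $\|E_k\|\to0$, Lemma \ref{techlem} yields, for $\ep$ large, that $\|D(T^2)\|<1$ uniformly, i.e. $T^2$ is a strong contraction; the contraction mapping theorem then gives a unique fixed point $(\bar\pi,\bar V)$, and since $T(\bar\pi,\bar V)$ is again fixed by $T^2$, uniqueness forces $T(\bar\pi,\bar V)=(\bar\pi,\bar V)$, which is the (unique) stationary solution, recovering existence as well. I expect the main obstacle to lie in the two blocks that are small only because of the geometry rather than an explicit factor of $1/\ep$, namely showing $\partial_V\Gg$ is small in the quotient norm $\|\cdot\|_\#$ (the averaging effect of $P$) and that $\partial_\pi\Kk$ is small only after restriction to the simplex tangent space. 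A related point requiring care is uniformity: the estimate $p_{ij}\to 1/d$ fails for unbounded $\|V\|_\#$, so one must first use the a-priori bound \eqref{compest1} of \S\ref{apb} to confine the dynamics to the forward-invariant compact set $\Sss\times\{\,\|V\|_\#\le C\,\}$, on which all derivative bounds hold uniformly and on which the contraction and hence uniqueness are established.
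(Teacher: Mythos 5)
Your proposal is correct and follows essentially the same route as the paper: the same block decomposition of $DT$, the same $\ep\to\infty$ limits of the four blocks, Lemma \ref{techlem}, and the a-priori bound of \S\ref{apb} to confine $\|V\|_\#$ to a compact set. The only (cosmetic) difference is that you handle the degenerate directions by working with the induced norm on $\Rr^d/\Rr$ and the tangent space of $\Sss$, whereas the paper achieves the same effect by explicitly composing with the projection $I-[1/d]$ through the normalized operators $\hat\Gg$ and $\hat\Kk$.
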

\begin{proof} 
From proposition \ref{prop8} we have $\|\Gg_{\pi}(V)\|_\#$ is uniformly bounded. Therefore it suffices to show that the operator $(\Kk, \Gg)$ is a strong contraction
for $V$ in a compact set (with respect to the norm $\|\cdot\|_\#$).
Also, we can replace $\Gg$ by
\[
\hat \Gg_{\pi}(V)_i=\Gg_{\pi}(V)_i-\frac 1 d\sum_k \Gg_{\pi}(V)_k,
\]
and $\Kk_V(\pi)$ by
\[
\hat \Kk_V(\pi)_i=\Kk_V(\pi)_i-\frac 1 d \left(\sum_j \Kk_V(\pi)_j -1\right).
\]
Since any fixed point to $(\hat \Gg, \hat \Kk)$ is a stationary solution.
In this way $\|\Gg_{\pi}(V)\|_{\#}=\|\hat \Gg_{\pi}(V)\|$, and for any $\pi$ (not necessarily a probability measure),
$\hat \Kk_V(\pi)$ is a probability measure, and agrees with $\Kk_V(\pi)$ if $\pi$ is a probability measure.

We will show that, for $\epsilon$ sufficiently large, the pair $(\hat \Gg, \hat \Kk)$
satisfies the hypothesis of lemma \ref{techlem}.
To do so, we first compute the matrix
\[
\left[
\begin{array}{cc}
\frac{\partial \Gg}{\partial V}&\frac{\partial \Gg}{\partial \pi}\\
\frac{\partial \Kk}{\partial V}&\frac{\partial \Kk}{\partial \pi}
\end{array}
\right].
\]
We will show that, when $\epsilon \to \infty$, this matrix converges to
\[
\left[
\begin{array}{cc}
[1/d]&[\frac 1 d \sum_k \frac{\partial c_{ik}}{\partial \pi_j}]\\
\left[0\right]&[1/d]
\end{array}
\right],
\]
where we denote by $[a]$ the $d\times d$ matrix whose entries are all identical to $a$.

Since $\Gg_{ \pi,\ep}( V)_i=-\ep \ln\left(\sum_k e^{-\frac{c_{ik}( \pi)+ V_k}{\ep}}\right)$, we have:
\[
\left(\frac{\partial \Gg_{\pi} (V)}{\partial V_j} \right)_i =
\frac{
e^{-\frac{c_{ij}( \pi)+ V_j}{\ep}}
}{
\sum_k e^{-\frac{c_{ik}( \pi)+ V_k}{\ep}}
} \longrightarrow 1/d,
\]
when $\ep \to \infty$. We also have
\[
\left(\frac{\partial \Gg_{\pi} (V)}{\partial \pi_j} \right)_i =
\frac{\sum_k     \frac{\partial c_{ik}(\pi)}{\partial \pi_j}   e^{-\frac{c_{ij}( \pi)+ V_j}{\ep}}
       }
       {
       \sum_k e^{-\frac{c_{ik}( \pi)+ V_k}{\ep}}
       } \longrightarrow \frac 1 d\sum_k \frac{\partial c_{ik}(\pi)}{\partial \pi_j}  ,
\]
when $\ep \to \infty$.

Now we consider
$
\mathcal K_{V}(\pi)_i = \sum_k \pi_k P_{ki}$, where
\[
P_{ki}=\frac{e^{-\frac{c_{ki}(\pi)+V_i}{\ep}}}{\sum_l e^{-\frac{c_{kl}(\pi)+V_l}{\ep}}} \,.
\]
We have
\[
\left(\frac{\partial \mathcal K_{V}(\pi)}{\partial \pi_j} \right)_i =
P_{ji} +
\sum_k \pi_k
\frac
{
\sum_l e^{-\frac{c_{kl}(\pi)+V_l}{\ep}}
e^{-\frac{c_{ki}(\pi)+V_i}{\ep}}
\left(
\frac{\partial c_{kl}(\pi)}{\partial \pi_j}
-\frac{\partial c_{ki}(\pi)}{\partial \pi_j}
\right)
}
{\ep \left(\sum_l e^{-\frac{c_{kl}(\pi)+V_l}{\ep}}\right)^2}
\]
If we take $\ep \to \infty$, the second term tends to zero while $P_{ij} \to \frac{1}{d}$. Thus
\[
\left(\frac{\partial \mathcal K_{\bar V,\ep}(\bar \pi)}{\partial \pi_j} \right)_i \longrightarrow \frac{1}{d}\,. \]
Since
\[
\left(\frac{\partial \mathcal K_{\bar V,\ep}(\bar \pi)}{\partial V_j} \right)_i
=
\sum_k \pi_k
\frac
{
\sum_l e^{-\frac{c_{kl}(\pi)+V_l}{\ep}}
e^{-\frac{c_{ki}(\pi)+V_i}{\ep}}
\left(
\delta_{lj}
-\delta_{kj}
\right)
}
{\ep \left(\sum_l e^{-\frac{c_{kl}(\pi)+V_l}{\ep}}\right)^2},
\]
we obtain
\[
\left(\frac{\partial \mathcal K_{\bar V,\ep}(\bar \pi)}{\partial V_j} \right)_i \longrightarrow 0, \] when $\ep \to \infty$.

Finally, observe that
\[
\left[
\begin{array}{cc}
\frac{\partial \hat\Gg}{\partial V}&\frac{\partial \hat \Gg}{\partial \pi}\\
\frac{\partial \hat \Kk}{\partial V}&\frac{\partial \hat \Kk}{\partial \pi}
\end{array}
\right]
=
\left[
\begin{array}{cc}
I-[1/d]&\left[0\right]\\
\left[0\right]&I-[1/d]
\end{array}
\right]
\left[
\begin{array}{cc}
\frac{\partial \Gg}{\partial V}&\frac{\partial \Gg}{\partial \pi}\\
\frac{\partial \Kk}{\partial V}&\frac{\partial \Kk}{\partial \pi}
\end{array}
\right].
\]
This shows that, in the limit $\epsilon\to \infty$, we have
\[
\left[
\begin{array}{cc}
\frac{\partial \hat\Gg}{\partial V}&\frac{\partial \hat \Gg}{\partial \pi}\\
\frac{\partial \hat \Kk}{\partial V}&\frac{\partial \hat \Kk}{\partial \pi}
\end{array}
\right]
\to
\left[
\begin{array}{cc}
\left[0\right]&M\\
\left[0\right]&[0]
\end{array}
\right],
\]
where
\[
M=(I-[1/d])[\frac 1 d \sum_k \frac{\partial c_{ik}}{\partial \pi_j}].
\]
Thus, for $\epsilon$ large enough lemma \ref{techlem} yields the strong contraction property of $T^2$.
\end{proof}



\subsection{Optimal Stationary Solutions}
\label{oss2}

Given a probability measure $\eta_{ij}$, $1\leq i, j\leq d$, define $\pi_i^\eta=\sum_j \eta_{ij}$ and
let $P_{ij}^\eta$ be a stochastic matrix such that $\eta_{ij}=\pi_i^\eta P_{ij}^\eta$. If $\pi^\eta$ never vanishes
then $P^\eta$ is uniquely defined by $P_{ij}^\eta=\frac{\eta_{ij}}{\pi_i^\eta}$. A probability measure $\eta_{ij}$ is stationary
if
\begin{equation}
\label{holonomy}
\sum_j \eta_{ij}=\sum_j \eta_{ji}.
\end{equation}
For our purposes, in this section it is convenient to consider the following two auxiliary assumptions:
\begin{hypothesis}
\label{hpauxa}
For each $1\leq i\leq d$ the mapping $P_{i\cdot}\mapsto \sum_j c_{ij}(P_{i\cdot})P_{ij}$ is convex.
\end{hypothesis}
\noindent and
\begin{hypothesis}
\label{hpauxb}
The mapping
$\eta\mapsto \sum_{i,j} \pi^\eta c_{ij}(P_{i\cdot}^\eta) P^\eta_{ij}$ is strictly convex.
\end{hypothesis}
Consider a $C^1$ convex function function $f:\Rr^d\to \Rr$.
Consider the problem
\begin{equation}
\label{nloss}
\min_\eta \sum_{ij} \pi_i^\eta c_{ij}(P^\eta) P_{ij}^\eta+f(\pi^\eta),
\end{equation}
where the minimum is taken over all probability measures $\eta$ satisfying \eqref{holonomy}.

\begin{pro}
\label{mainproB}
Let $\eta>0$ be a solution of \eqref{nloss}. Let $V^\eta\in \Rr^d$ be the Lagrange multiplier associated to
the constraint \eqref{holonomy}, and $\lambda^\eta$ the Lagrange multiplier corresponding to $\sum_{ij} \eta_{ij}=1$.
Then $(\pi^\eta, V^\eta)$ is a stationary solution of
\[
V^\eta+\lambda^\eta=\Gg_{\pi^\eta}(V^\eta),
\]
where
\[
\Gg_\pi(V)_i=\frac{\partial f}{\partial \pi_i}(\pi)+\min_{P_{i\cdot}} \sum_j c_{ij}(P_{i\cdot})P_{ij}+ P_{ij} V_j.
\]
\end{pro}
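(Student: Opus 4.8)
The plan is to recognize \eqref{nloss} as a convex program with affine constraints and to read the mean field game equation directly off its Karush--Kuhn--Tucker conditions. Since both constraints --- the stationarity condition \eqref{holonomy} and the normalization $\sum_{ij}\eta_{ij}=1$ --- are affine, the KKT conditions hold at the minimizer $\eta$ with no further constraint qualification, and because $\eta>0$ the nonnegativity constraints $\eta_{ij}\ge 0$ are inactive and carry zero multipliers. I would therefore form the Lagrangian
\[
L(\eta)=\sum_{ij}c_{ij}(P_{i\cdot}^\eta)\,\eta_{ij}+f(\pi^\eta)-\sum_k V^\eta_k\Big(\sum_j\eta_{kj}-\sum_j\eta_{jk}\Big)-\lambda^\eta\Big(\sum_{ij}\eta_{ij}-1\Big),
\]
and impose $\partial L/\partial\eta_{ab}=0$ for every pair $(a,b)$.

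The main computational step is to differentiate through the reparametrization $\pi_i^\eta=\sum_j\eta_{ij}$, $P_{ij}^\eta=\eta_{ij}/\pi_i^\eta$. Writing $h_i(P_{i\cdot})=\sum_j c_{ij}(P_{i\cdot})P_{ij}$ so that the first term of $L$ is $\sum_i\pi_i^\eta h_i(P_{i\cdot}^\eta)$, one uses $\partial\pi_i/\partial\eta_{ab}=\delta_{ia}$ and $\partial P_{il}/\partial\eta_{ab}=\frac{\delta_{ia}}{\pi_a}(\delta_{lb}-P_{al})$ to obtain
\[
\frac{\partial}{\partial\eta_{ab}}\sum_i\pi_i h_i=h_a+\frac{\partial h_a}{\partial P_{ab}}-\sum_l P_{al}\frac{\partial h_a}{\partial P_{al}}.
\]
The remaining derivatives are immediate: $f(\pi^\eta)$ contributes $\partial f/\partial\pi_a$, the stationarity constraint contributes $V^\eta_b-V^\eta_a$, and the normalization contributes $-\lambda^\eta$. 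Setting the total to zero and collecting the $b$-dependent terms on the left yields
\[
\frac{\partial h_a}{\partial P_{ab}}+V^\eta_b=\mu_a,\qquad \mu_a:=V^\eta_a+\lambda^\eta-\frac{\partial f}{\partial\pi_a}-h_a+\sum_l P_{al}\frac{\partial h_a}{\partial P_{al}},
\]
where the right-hand side is independent of $b$.

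I would then make two observations. First, the relation $\partial h_a/\partial P_{ab}+V^\eta_b=\mu_a$, holding for all $b$ with $\mu_a$ playing the role of the row-sum multiplier, is exactly the first-order optimality condition for the inner problem $\min_{P_{a\cdot}\in\Sss}\sum_j\big(c_{aj}(P_{a\cdot})+V^\eta_j\big)P_{aj}$ that defines $\Gg_{\pi^\eta}(V^\eta)_a$; by the convexity assumption \ref{hpauxa} this condition is sufficient, so $P^\eta$ is precisely the Nash minimizer $P(\pi^\eta,V^\eta)$. Second, to recover the value-function equation I would multiply $\partial h_a/\partial P_{ab}+V^\eta_b=\mu_a$ by $P_{ab}$ and sum over $b$, using $\sum_b P_{ab}=1$; the term $\sum_b P_{ab}\,\partial h_a/\partial P_{ab}$ then cancels against the identical term hidden inside $\mu_a$, leaving
\[
V^\eta_a+\lambda^\eta=\frac{\partial f}{\partial\pi_a}+h_a(P_{a\cdot}^\eta)+\sum_b V^\eta_b P_{ab}^\eta=\Gg_{\pi^\eta}(V^\eta)_a.
\]
The stationarity of the distribution is then free: the constraint \eqref{holonomy} reads $\pi_i^\eta=\sum_j\pi_j^\eta P_{ji}^\eta$, i.e. $\pi^\eta=\pi^\eta P^\eta=\Kk_{V^\eta}(\pi^\eta)$, which together with the displayed equation exhibits $(\pi^\eta,V^\eta)$ as a stationary solution.

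The step I expect to be delicate is the derivative computation through $\eta\mapsto(\pi^\eta,P^\eta)$: the chain rule generates the extra term $\sum_l P_{al}\,\partial h_a/\partial P_{al}$, and the whole argument hinges on recognizing that this term is spurious, cancelling when one passes from the raw first-order condition to the mean field game equation. A secondary point worth stating explicitly is that convexity (assumption \ref{hpauxa}) is what upgrades the necessary first-order condition into the genuine minimization defining $\Gg$; it is also reassuring that the $d$ stationarity constraints are linearly dependent, so $V^\eta$ is determined only up to an additive constant, exactly matching the fact that $V^\eta+\lambda^\eta=\Gg_{\pi^\eta}(V^\eta)$ lives in $\Rr^d/\Rr$.
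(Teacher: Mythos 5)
Your proof is correct and follows essentially the same route as the paper: both read the mean field game equations off the first-order (KKT) conditions of \eqref{nloss}, with $V^\eta$ and $\lambda^\eta$ as the multipliers of \eqref{holonomy} and of the normalization, and both invoke assumption \ref{hpauxa} to upgrade the critical-point condition of the inner row problem to a genuine minimum, so that the remaining identity is exactly $V^\eta_a+\lambda^\eta=\Gg_{\pi^\eta}(V^\eta)_a$. The only difference is presentational: the paper changes variables to $(\pi,P)$ and differentiates the Lagrangian functional \eqref{fctnal} in those coordinates, whereas you differentiate in the $\eta$ coordinates and check explicitly that the extra chain-rule term $\sum_l P_{al}\,\partial h_a/\partial P_{al}$ cancels, which is a more careful rendering of the same computation.
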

\begin{proof}
Let $\eta$ be as in the statement. As before write $\eta_{ij}=\pi_i^\eta P_{ij}^\eta$.
Then both $\pi^\eta$ and $P_{ij}^\eta$ are critical points of the functional
\begin{equation}
\label{fctnal}
f(\pi)+\sum_{i} \pi_i \left[\left(\sum_j c_{ij}(P) P_{ij}+V_j^\eta P_{ij}\right)-V_i^\eta-\lambda^\eta\right].
\end{equation}
Consequently, for each $i$, $P_{i\cdot}$ is a critical point of
\[
\sum_j c_{ij}(P) P_{ij}+V_j^\eta P_{ij},
\]
which by the convexity hypothesis, assumption \ref{hpauxa}, is a minimizer.
Furthermore, by differentiating \eqref{fctnal} with respect to $\pi_i$, we  obtain
\[
V_i+\lambda^\eta=\Gg_{\pi^\eta}(V^\eta)_i.
\]
Finally, we can write \eqref{holonomy} as
\[
\pi^\eta=\Kk_{V^\eta}(\pi^\eta),
\]
which ends the proof.
\end{proof}

\begin{pro}
Suppose assumptions \ref{hp7},  and \ref{hpauxb} hold. Then there exists at most one stationary solution
with $\pi, P>0$.
\end{pro}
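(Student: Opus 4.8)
The plan is to realize every stationary solution with positive $\pi$ and $P$ as a critical point of the strictly convex functional in \eqref{nloss}, and then to use strict convexity to conclude that there can be at most one such point. To begin, I would record that the functional
\[
G(\eta)=\sum_{ij}\pi_i^\eta\, c_{ij}(P_{i\cdot}^\eta)\,P_{ij}^\eta+f(\pi^\eta)
\]
is strictly convex on the convex feasible set $\mathcal F=\{\eta\geq 0:\ \sum_{ij}\eta_{ij}=1,\ \sum_j\eta_{ij}=\sum_j\eta_{ji}\ \forall i\}$. The first term is strictly convex by assumption \ref{hpauxb}; since $\eta\mapsto\pi^\eta$ is linear and $f$ is convex, $f(\pi^\eta)$ is convex, so the sum is strictly convex. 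The set $\mathcal F$ is the intersection of the positive orthant with the affine constraints \eqref{holonomy} and $\sum_{ij}\eta_{ij}=1$, hence convex.

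Next, given a stationary solution $(\pi,V)$ with optimal matrix $P$ and $\pi,P>0$, I would set $\eta_{ij}=\pi_i P_{ij}$. Then $\eta\in\mathcal F$: the normalization is immediate and the holonomy identity \eqref{holonomy} is exactly $\pi=\pi P$; moreover $\eta$ is interior to the orthant because $\pi,P>0$. The central step is to verify the first-order optimality condition for $G$ over $\mathcal F$, namely that $DG(\eta)$ lies in the span of the equality-constraint gradients,
\[
\frac{\partial G}{\partial\eta_{kl}}(\eta)=V_k-V_l+\lambda\qquad\text{for all }k,l,
\]
with $V$ and $\lambda$ serving as multipliers for \eqref{holonomy} and for the normalization. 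This is the computation of proposition \ref{mainproB} read in reverse: by assumption \ref{hp7} the term $\sum_{ij}c_{ij}(P_{i\cdot}^\eta)\eta_{ij}$ decouples across rows, so $\partial G/\partial\eta_{kl}$ only involves row $k$. Expanding $P_{ij}^\eta=\eta_{ij}/\pi_i^\eta$ and invoking the interior first-order (Nash) conditions for the inner minimization defining $\Gg_{\pi}(V)_k$ — available precisely because $P_{k\cdot}>0$ — together with the Bellman relation $V_k+\lambda=\Gg_{\pi}(V)_k$, the per-row multiplier cancels and one is left with $V_k-V_l+\lambda$.

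Finally, since $G$ is strictly convex on the convex set $\mathcal F$ and $\eta$ is an interior point satisfying the first-order optimality condition, $\eta$ is the unique global minimizer of $G$ over $\mathcal F$: for any other $\eta'\in\mathcal F$ the direction $\eta'-\eta$ satisfies the homogeneous equality constraints and therefore annihilates $DG(\eta)$, so convexity gives $G(\eta')\geq G(\eta)$, with strict inequality unless $\eta'=\eta$. Consequently two stationary solutions with positive $\pi,P$ would yield the same minimizer $\eta$, forcing $\pi=\pi^\eta$ and $P=P^\eta$ to coincide (the value function then agreeing up to the additive constant inherent in the multiplier $V$). I expect the middle step to be the main obstacle: checking that the Nash/Bellman stationarity equations are exactly the first-order optimality conditions for $G$. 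This is where assumption \ref{hp7} (row decoupling) and the positivity of $P$ (so the inner minimizer is interior and characterized by its first-order conditions) are essential, whereas assumption \ref{hpauxb} enters only to upgrade ``a critical point'' to ``the unique minimizer''.
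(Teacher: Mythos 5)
Your proposal is correct and is essentially the argument the paper intends: the paper's proof consists of the single observation that for a strictly convex objective under linear constraints the KKT conditions are sufficient as well as necessary, so a stationary solution with $\pi,P>0$ (which by the computation of proposition \ref{mainproB}, read in reverse, satisfies exactly those KKT conditions for \eqref{nloss}) must be the unique minimizer. You have simply made explicit the verification that the Bellman/Nash equations coincide with the first-order conditions for $G$ on the feasible set, which the paper leaves implicit.
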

\begin{proof}
This proposition follows from the well known fact (see for instance \cite{Pedregal})
 that for strictly convex objective functions under linear constraints the KKT conditions
are not only necessary but also sufficient.
\end{proof}

Note that if $f$ is a strictly convex function, then the previous proposition gives us another proof of the existence and uniqueness of the stationary solution in the case  $c_{ij}(\pi,P)=\tilde c_{ij}(P_{i\cdot})+W(\pi)$, with $W(\pi)=\frac{\partial f}{\partial \pi_i}(\pi)$.

\section{Solutions to the Mean Field Game initial-terminal value problem}
\label{esmfg}


In this section we prove the existence (\S\ref{esitp}) and uniqueness (\S\ref{usitvp})
of solutions to the initial-terminal value problem.

\subsection{Existence of Solutions}
\label{esitp}

\begin{teo}\label{teoD}
Suppose assumptions \ref{hp4} and \ref{hp5} hold. Then
for any initial probability vector $\tilde \pi\in \Sss$ and terminal cost $\tilde V$ there exists a solution
$$\{(\pi^n,V^n) \;  ; \;0\leq n \leq N \}$$ to the initial-terminal value problem for the mean field game
with $\pi^0=\tilde \pi$ and $V^N=\tilde V$.
\end{teo}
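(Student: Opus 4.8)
The plan is to recast the initial-terminal value problem as a fixed point problem for a continuous self-map of the compact convex set of distribution trajectories, and then invoke Brouwer's fixed point theorem, much as in the existence proof for stationary solutions (theorem \ref{teoB}). The key structural observation is that the two boundary conditions sit at opposite ends of time: $\pi^0=\tilde\pi$ is prescribed at $n=0$ while $V^N=\tilde V$ is prescribed at $n=N$, and the value equation in \eqref{mfgc} runs backwards whereas the transport equation runs forwards. The idea is to decouple these two sweeps, using the backward sweep to produce the value functions and the forward sweep to produce a new distribution trajectory.

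Concretely, I would work on the set $X:=\{\tilde\pi\}\times\Sss^N$ of trajectories $m=(\pi^0,\pi^1,\dots,\pi^N)$ with $\pi^0=\tilde\pi$ held fixed, which is a product of simplices and hence compact and convex. Given such an $m$, first run the backward sweep: set $V^N=\tilde V$ and define $V^n=\Gg_{\pi^n}(V^{n+1})$ for $n=N-1,\dots,0$. This determines a trajectory $(V^0,\dots,V^N)$ as a composition of the maps $(\pi,V)\mapsto\Gg_\pi(V)$, each of which is continuous: under assumptions \ref{hp4} and \ref{hp5}, proposition \ref{cont} gives continuity of $P(\pi,V)$, and since $\Gg_\pi(V)=e(\pi,P(\pi,V),V)$ with $e$ continuous by assumption \ref{hp5}, the map $(\pi,V)\mapsto\Gg_\pi(V)$ is continuous. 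Hence each $V^n$ depends continuously on $m$.

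Next, run the forward sweep using these value functions: set $\hat\pi^0=\tilde\pi$ and $\hat\pi^{n+1}=\Kk_{V^{n+1}}(\hat\pi^n)$ for $n=0,\dots,N-1$. Because $\Kk_V(\pi)=\pi\bar P$ with $\bar P=P(\pi,V)$ stochastic, each $\hat\pi^n$ is again a probability vector, so the output trajectory $\hat m=(\hat\pi^0,\dots,\hat\pi^N)$ lies in $X$; and each $\hat\pi^n$ depends continuously on $(V^1,\dots,V^n)$, hence on $m$, again by proposition \ref{cont}. This defines a continuous self-map $\Phi\colon X\to X$, $\Phi(m)=\hat m$. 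By Brouwer's fixed point theorem $\Phi$ has a fixed point $m=\hat m$, and unwinding the definitions shows that $\pi^0=\tilde\pi$, $V^N=\tilde V$, $V^n=\Gg_{\pi^n}(V^{n+1})$ and $\pi^{n+1}=\Kk_{V^{n+1}}(\pi^n)$ for all $n$, i.e. $\{(\pi^n,V^n)\}$ solves \eqref{mfgc} with the prescribed initial-terminal data.

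The one point requiring care, and the main difference from theorem \ref{teoB}, is that I deliberately do \emph{not} apply the fixed point theorem on the product space of $(\pi,V)$-trajectories: the value functions $V^n\in\Rr^d$ are not a priori confined to a compact set, since the bounds of \S\ref{apb} rely on assumption \ref{hp6}, which is not among the hypotheses here. The remedy is precisely to run Brouwer on the $\pi$-trajectory space $X$ alone, treating $(V^0,\dots,V^N)$ as a continuous functional of $m$ generated by the backward sweep; compactness and convexity then come for free from $\Sss^N$. Beyond this choice of domain, the argument is routine once the continuity of $\Gg$ and $\Kk$ supplied by proposition \ref{cont} is in hand.
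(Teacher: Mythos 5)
Your proof is correct and takes essentially the same approach as the paper's: both run a backward sweep for the value functions followed by a forward sweep for the distributions, view this as a continuous self-map of the compact convex set of distribution trajectories with $\pi^0=\tilde\pi$ fixed, and conclude by Brouwer's fixed point theorem. Your write-up is in fact slightly more careful, spelling out the continuity via proposition \ref{cont} and explaining why the fixed point argument is confined to the $\pi$-trajectory space so that no compactness of the $V$'s is needed.
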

\begin{proof}
Suppose we are given a sequence $\pi^{n,0}\in \Sss^{N+1}$ of probability vectors, with $\pi^{0,0}=\tilde \pi$.
Define, for $0\leq n\leq N$
\[
V^{n,0}=\Gg_{\pi^{n,0}}(V^{n+1,0}),
\]
with $V^{N,0}=\tilde V$. Then let
\[
\pi^{n+1,1}=\Kk_{V^{n,0}}(\pi^{n,1}),
\]
with $\pi^{0,1}=\tilde \pi$. This procedure defines a continuous mapping from $\Sss^{N+1}$ into itself that associates
to the sequence $\pi^{n,0}$ the new sequence of probability vectors $\pi^{n,1}$.
Therefore, by Brower's fixed point theorem, it has a fixed point, which corresponds to a solution to the problem.
\end{proof}

\subsection{Uniqueness}
\label{usitvp}

As for stationary solution we adapt Lasry and Lions monotonicity arguments to obtain uniqueness of solutions.

\begin{teo}
\label{teoE}
Suppose assumptions \ref{hp4}, \ref{hp7} and \ref{hp8} hold.
Let  $\{(\pi^n,V^n) \;  ; \;0\leq n \leq N \} \, $ and $\{(\tilde\pi^n,\tilde V^n) \;  ; \;0\leq n \leq N \} \, $ be  solutions of the the mean field game with $\pi^0=\tilde \pi^0$ and $V^N=\tilde V^N$.
Then $\pi^n=\tilde\pi^n$, and $V^n=\tilde V^n$, for all $0\leq n \leq N$.
\end{teo}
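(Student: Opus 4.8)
The plan is to mimic the monotonicity argument used in the stationary uniqueness result (Proposition \ref{ups}), but now summed over the time steps $n$. The key algebraic identity will be a telescoping/summation-by-parts manipulation exploiting that the two solutions share the same initial datum $\pi^0=\tilde\pi^0$ and the same terminal datum $V^N=\tilde V^N$. Concretely, I would start from the quantity
\[
S=\sum_{n=0}^{N-1}\Big[(V^n-\tilde V^n)\cdot(\pi^{n}-\tilde\pi^{n})-(V^{n+1}-\tilde V^{n+1})\cdot(\pi^{n+1}-\tilde\pi^{n+1})\Big],
\]
which telescopes to $(V^0-\tilde V^0)\cdot(\pi^0-\tilde\pi^0)-(V^N-\tilde V^N)\cdot(\pi^N-\tilde\pi^N)$, and this equals $0$ because $\pi^0=\tilde\pi^0$ and $V^N=\tilde V^N$. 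The heart of the proof is to re-expand each summand using the dynamics $V^n=\Gg_{\pi^n}(V^{n+1})$ and $\pi^{n+1}=\Kk_{V^{n+1}}(\pi^n)$ (and the tilde versions) so that $S$ becomes a sum of terms each of which has a definite sign.

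First I would, for each fixed $n$, substitute the evolution equations into $(V^n-\tilde V^n)\cdot(\pi^n-\tilde\pi^n)-(V^{n+1}-\tilde V^{n+1})\cdot(\pi^{n+1}-\tilde\pi^{n+1})$ and regroup exactly as in Proposition \ref{ups}: introduce the cross terms $\Gg_{\pi^n}(V^{n+1})$, $\Gg_{\tilde\pi^n}(V^{n+1})$, $\Kk_{V^{n+1}}(\pi^n)$, $\Kk_{\tilde V^{n+1}}(\tilde\pi^n)$ and pair them so that I can apply two estimates. The concavity estimate \eqref{kav3} (together with Proposition \ref{concav}) will control the terms coupling the two value functions at a common $\pi$, giving
\[
\pi^{n}\cdot(\Gg_{\pi^{n}}(\tilde V^{n+1})-\Gg_{\pi^{n}}(V^{n+1}))+(V^{n+1}-\tilde V^{n+1})\cdot\Kk_{V^{n+1}}(\pi^{n})\leq 0,
\]
and symmetrically for the tilde system; while assumption \ref{hp8} will control the terms coupling the two distributions at a common $V$, yielding a bound below by $\gamma\|\pi^n-\tilde\pi^n\|^2$. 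Summing these over $n$ shows $0=S\leq -\gamma\sum_n\|\pi^n-\tilde\pi^n\|^2$, forcing $\pi^n=\tilde\pi^n$ for every $n$.

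Once the distributions agree for all $n$, the value functions must satisfy the same backward recursion $V^n=\Gg_{\pi^n}(V^{n+1})$ with $\pi^n=\tilde\pi^n$ and the common terminal condition $V^N=\tilde V^N$; by backward induction from $n=N$ this gives $V^n=\tilde V^n$ for all $n$, completing the proof. The main obstacle I anticipate is the bookkeeping in the summation-by-parts step: one has to be careful that the telescoping identity is set up with exactly the right shifts in the time index so that only the prescribed boundary data survive, and that the cross terms separate cleanly into a part handled by \eqref{kav3} and a part handled by assumption \ref{hp8}. The sign conventions in \eqref{kav3} versus assumption \ref{hp8} must be tracked precisely, since a single misaligned index would spoil the definiteness of the final inequality. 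Unlike the stationary case, here there is no $\lambda$ term to discard, but there is the genuinely new feature that the boundary contributions cancel rather than vanish term-by-term, so the argument is global in $n$ rather than pointwise.
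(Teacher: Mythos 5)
Your proposal is correct and follows essentially the same route as the paper: the paper's identity, once expanded, is exactly your telescoping sum plus the cross terms, the boundary contributions vanish by $\pi^0=\tilde\pi^0$ and $V^N=\tilde V^N$, and the same two estimates (\eqref{kav3} for the common-$\pi$ terms, assumption \ref{hp8} for the common-$V$ terms) give $\sum_n\gamma\|\pi^n-\tilde\pi^n\|^2\leq 0$. The concluding backward induction for $V^n$ is also the paper's argument.
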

\begin{proof} We have
$$\Gg_{\pi^n}(V^{n+1})=V^{n}, \;\;\; \;\;\;\Kk_{V^{n+1}}(\pi^n)=\pi^{n+1},$$ and
$$\Gg_{\tilde \pi^n}(\tilde V^{n+1})=\tilde V^{n}, \;\;\; \;\;\;\Kk_{\tilde V^{n+1}}(\tilde \pi^n)=\tilde \pi^{n+1}.$$
Then
\begin{align*}0=&\sum_{n=0}^{N-1}(V^{n+1}-\tilde V^{n+1})\cdot [(\Kk_{V^{n+1}}(\pi^n)-\pi^{n+1})-(\Kk_{\tilde V^{n+1}}(\tilde \pi^n)-\tilde \pi^{n+1})]\\
&
+\sum_{n=0}^{N-1}(\pi^n-\tilde \pi^n)\cdot [(\Gg_{\tilde \pi^n}(\tilde V^{n+1})-\tilde V^{n})-(\Gg_{\pi^n}(V^{n+1})-V^{n})].
\end{align*}
Note that $(V^N-\tilde V^N)\cdot (\tilde \pi^N-\pi^N)=0$ and $( \pi^0-\tilde\pi^0)\cdot (V^0-\tilde V^0)=0$.
Thus rewriting the identity above we have
$$0=\sum_{n=0}^{N-1} \pi^n\cdot (\Gg_{ \pi^n}(\tilde V^{n+1})-\Gg_{ \pi^n}( V^{n+1}))+\Kk_{V^{n+1}}(\pi^n)\cdot (V^{n+1}-\tilde V^{n+1}) \;+ $$
$$ +\sum_{n=0}^{N-1} \tilde\pi^n\cdot (\Gg_{\tilde \pi^n}( V^{n+1})-\Gg_{\tilde \pi^n}( \tilde V^{n+1}))+\Kk_{\tilde V^{n+1}}(\tilde \pi^n)\cdot
(\tilde V^{n+1}- V^{n+1}) \;+$$
$$+ \sum_{n=0}^{N-1} \pi^n\cdot (\Gg_{ \tilde\pi^n}(\tilde V^{n+1})-\Gg_{ \pi^n}( \tilde V^{n+1}))+\tilde \pi^n\cdot (\Gg_{ \pi^n}( V^{n+1})-\Gg_{ \tilde \pi^n}(  V^{n+1})).$$
Now, using  \eqref{kav3}, we have, for each $0\leq n\leq N-1$,
$$\pi^n\cdot (\Gg_{ \pi^n}(\tilde V^{n+1})-\Gg_{ \pi^n}( V^{n+1}))+\Kk_{V^{n+1}}(\pi^n)\cdot (V^{n+1}-\tilde V^{n+1})\leq 0$$ and similarly for the terms of the second line. In the third line we have
$$\pi^n\cdot (\Gg_{ \tilde\pi^n}(\tilde V^{n+1})-\Gg_{ \pi^n}( \tilde V^{n+1}))+\tilde \pi^n\cdot (\Gg_{ \pi^n}( V^{n+1})-\Gg_{ \tilde \pi^n}(  V^{n+1}))\leq -\gamma \|\pi^n-\tilde \pi^n\|^2.$$
Hence $$\sum_{n=0}^{N-1} \gamma \|\pi^n-\tilde \pi^n\|^2\leq 0.$$
This implies $\pi^n=\tilde\pi^n$ for all $0\leq n \leq N$.

To obtain  $V^n=\tilde V^n$ for all $0\leq n \leq N$, we just have to use
$V^N=\tilde V^N$ and apply iteratively the operator
 $\Gg_{\pi^n}(V^{n+1})=V^{n}$ and  $\Gg_{\tilde \pi^n}(\tilde V^{n+1})=\tilde V^{n}$, from $n=N-1$ to $n=0$.
\end{proof}

%
%



\section{Convergence to equilibrium}
\label{cvte}

In this last section we discuss the main contribution of this paper, namely the exponential convergence to equilibrium
for the initial-terminal value problem.
Our setting is the following: consider a
initial-terminal value problem with initial data
$\pi^{-N}$ and terminal data $V^N$. We will now study conditions under which
$\pi^0\to \bar \pi$ and $V^0\to \bar V$ where $(\bar \pi, \bar V)$ are stationary solutions, as $N\to \infty$.
In fact we will show this is true if assumptions \ref{hp4}, \ref{hp7}, \ref{hp8}, \ref{hp10} and \ref{hp11} hold.

\subsection{A-priori bounds}
\label{apb2}

We start by establishing some useful a-priori bounds.
\begin{lem}\label{bound}
Suppose assumption \ref{hp4}, \ref{hp7}, and \ref{hp11} holds.
Let  $\{(\pi^n,V^n) \;  ; \;-N\leq n \leq N \} \, $ and $\{(\tilde\pi^n,\tilde V^n) \;  ; \;-N\leq n \leq N \} \, $  be two  solutions of  the mean field game. Then we have
\begin{equation}\label{lema_conveq} \|\tilde V^{-N}-V^{-N} \| \leq  \|\tilde V^N-V^N \| +N2K .
\end{equation}
\end{lem}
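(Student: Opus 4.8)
The plan is to reduce the global bound to a one-step estimate of the form
$$\|\tilde V^n - V^n\| \le \|\tilde V^{n+1} - V^{n+1}\| + K,$$
valid for each $-N \le n \le N-1$, and then to iterate it over the $2N$ backward steps from $n=N$ to $n=-N$. Since both sequences solve the mean field game, we have $V^n = \Gg_{\pi^n}(V^{n+1})$ and $\tilde V^n = \Gg_{\tilde\pi^n}(\tilde V^{n+1})$, so everything comes down to controlling $\Gg_{\tilde\pi^n}(\tilde V^{n+1})_i - \Gg_{\pi^n}(V^{n+1})_i$ componentwise.

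First I would invoke assumption \ref{hp7}, which lets me write $\Gg$ as the explicit pointwise minimum \eqref{mnq}. Fix $i$ and let $P$ be the optimal transition matrix for $\Gg_{\pi^n}(V^{n+1})_i$, so that $\Gg_{\pi^n}(V^{n+1})_i = \sum_j c_{ij}(\pi^n, P_{i\cdot}) P_{ij} + V^{n+1}_j P_{ij}$. Using this same $P$ as a suboptimal competitor in the minimum defining $\Gg_{\tilde\pi^n}(\tilde V^{n+1})_i$ gives
$$\Gg_{\tilde\pi^n}(\tilde V^{n+1})_i - \Gg_{\pi^n}(V^{n+1})_i \le \sum_j \left[ c_{ij}(\tilde\pi^n, P_{i\cdot}) - c_{ij}(\pi^n, P_{i\cdot}) \right] P_{ij} + \sum_j (\tilde V^{n+1}_j - V^{n+1}_j) P_{ij}.$$
The first sum is bounded by $K$ using assumption \ref{hp11} together with $\sum_j P_{ij} = 1$, and the second is bounded by $\|\tilde V^{n+1} - V^{n+1}\|$ (sup norm) since $P_{i\cdot}$ is a probability vector. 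Exchanging the roles of the two solutions, i.e. using the optimal matrix for $\Gg_{\tilde\pi^n}(\tilde V^{n+1})_i$ as a competitor on the other side, yields the reverse inequality, so $|\tilde V^n_i - V^n_i| \le \|\tilde V^{n+1} - V^{n+1}\| + K$ for every $i$, which is exactly the one-step estimate.

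Finally I would iterate: applying the one-step bound for $n = N-1, N-2, \ldots, -N$ accumulates $2N$ copies of $K$, giving
$$\|\tilde V^{-N} - V^{-N}\| \le \|\tilde V^N - V^N\| + 2NK,$$
as claimed. I do not expect a serious obstacle here; the only points requiring care are obtaining both directions of the competitor inequality (needed to pass from a one-sided bound to the absolute value) and tracking that the index runs over exactly $2N$ steps, so that the accumulated constant is $N\cdot 2K$ and not off by a factor. Working consistently with the sup norm keeps the coefficient of $K$ equal to one at each step, which is what produces the stated linear-in-$N$ bound.
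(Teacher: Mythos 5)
Your proof is correct and follows essentially the same route as the paper's: a one-step suboptimal-competitor estimate combining assumption \ref{hp11} with the fact that $P_{i\cdot}$ is a probability vector, iterated over the $2N$ backward steps. The only cosmetic difference is that the paper splits the one-step bound into two pieces (the $V$-perturbation via proposition \ref{concav} and the $\pi$-perturbation via \ref{hp11}) whereas you absorb both into a single competitor inequality; the content is identical.
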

\begin{proof}
Applying  proposition \ref{concav} we have that $$\Gg_{\pi^{N-1}}(\tilde V^N)_i -\Gg_{\pi^{N-1}}(V^N)_i \leq \sum_j P_{ij}^{ V^N, \pi^{N-1}}(\tilde V^N_j-V^N_j).$$
Then
$$\Gg_{\pi^{N-1}}(\tilde V^N)_i - \Gg_{\tilde \pi^{N-1}}(\tilde V^N)_i+\tilde V^{N-1}_i -V^{N-1}_i\leq \|\tilde V^N-V^N \|, $$
also we have
\begin{align*}
&\Gg_{\pi^{N-1}}(\tilde V^N)_i- \Gg_{\tilde \pi^{N-1}}(\tilde V^N)_i \\&\qquad \leq \sum_j
[c_{ij}(\pi^{N-1},P_{i\cdot}^{\tilde V^N,\tilde \pi^{N-1}})-c_{ij}(\tilde \pi^{N-1},P_{i\cdot}^{\tilde V^N,\tilde \pi^{N-1}})]P_{ij}^{\tilde V^N,\tilde \pi^{N-1}}\leq K.
\end{align*}
Hence,
$$\tilde V^{N-1}_i -V^{N-1}_i \leq \|\tilde V^N-V^N \| +K$$
Exchanging the roles of $(\pi^{N-1},V^N)$ and $(\tilde\pi^{N-1},\tilde V^N)$  we get
$$ V^{N-1}_i -\tilde V^{N-1}_i\leq \|\tilde V^N-V^N \| +K,$$ thus
$$\|\tilde V^{N-1}-V^{N-1} \| \leq  \|\tilde V^N-V^N\| +K $$
Reasoning by induction we obtain that
$$ \|\tilde V^{-N}-V^{-N} \| \leq  \|\tilde V^N-V^N \| +N2K .$$
 \end{proof}

\subsection{Exponential convergence}

We recover the proof of theorem \ref{teoE} to obtain an important estimate:
\begin{pro}
\label{trendtoeq}
Suppose assumptions \ref{hp4}, \ref{hp7}, \ref{hp8} and \ref{hp10} hold.
Let  $\{(\pi^n,V^n) \;  ; \;-N\leq n \leq N \} \, $ and $\{(\tilde\pi^n,\tilde V^n) \;  ; \;-N\leq n \leq N \} \, $ be  solutions to the
 mean field game. Let $C=1/\gamma$.
 Then
\begin{align*}
&\sum_{n=-N+1}^{N-1} \|\pi^n-\tilde \pi^n\|^2+\|V^{n}-\tilde V^{n}\|_{\#}^2\\ &\quad \leq C \left(
\|\pi^N-\tilde \pi^N\|^2+\|V^N-\tilde V^N\|_{\#}^2+\|\pi^{-N}-\tilde \pi^{-N}\|^2+\|V^{-N}-\tilde V^{-N}\|_{\#}^2\right)
\end{align*}
\end{pro}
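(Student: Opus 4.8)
The plan is to follow the algebraic identity at the heart of the proof of theorem \ref{teoE} verbatim, but now over the symmetric range $-N\le n\le N-1$, and to retain the boundary contributions which vanished there only because the data agreed. I would start from the identity
\begin{align*}
0=&\sum_{n=-N}^{N-1}(V^{n+1}-\tilde V^{n+1})\cdot [(\Kk_{V^{n+1}}(\pi^n)-\pi^{n+1})-(\Kk_{\tilde V^{n+1}}(\tilde \pi^n)-\tilde \pi^{n+1})]\\
&+\sum_{n=-N}^{N-1}(\pi^n-\tilde \pi^n)\cdot [(\Gg_{\tilde \pi^n}(\tilde V^{n+1})-\tilde V^{n})-(\Gg_{\pi^n}(V^{n+1})-V^{n})],
\end{align*}
each bracket being zero since both families solve \eqref{mfgc}. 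Expanding the inner products and reindexing the $\pi^{n+1},\tilde\pi^{n+1}$ contributions, the diagonal cross terms $(\pi^n-\tilde\pi^n)\cdot(V^n-\tilde V^n)$ telescope and leave precisely $(\pi^{-N}-\tilde\pi^{-N})\cdot(V^{-N}-\tilde V^{-N})-(\pi^{N}-\tilde\pi^{N})\cdot(V^{N}-\tilde V^{N})$, while the remaining terms regroup into the same three lines that appear in theorem \ref{teoE}.

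Next I would estimate those three groups, but now using the strict inequalities rather than the weak ones. Whereas theorem \ref{teoE} invoked \eqref{kav3}, here the first two groups are controlled by assumption \ref{hp10}, giving for each $n$
\[
\pi^n\cdot (\Gg_{\pi^n}(\tilde V^{n+1})-\Gg_{\pi^n}(V^{n+1}))+\Kk_{V^{n+1}}(\pi^n)\cdot (V^{n+1}-\tilde V^{n+1})\le -\gamma_{\pi^n}\|V^{n+1}-\tilde V^{n+1}\|_\#^2,
\]
and analogously for the $\tilde\pi^n$ group, while the third group is bounded by assumption \ref{hp8}, which yields $\le -\gamma\|\pi^n-\tilde\pi^n\|^2$. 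Substituting these into the identity and moving the boundary terms to the right gives
\[
\gamma\sum_{n=-N+1}^{N-1}\left(\|\pi^n-\tilde\pi^n\|^2+\|V^n-\tilde V^n\|_\#^2\right)\le (\pi^{-N}-\tilde\pi^{-N})\cdot(V^{-N}-\tilde V^{-N})-(\pi^N-\tilde\pi^N)\cdot(V^N-\tilde V^N),
\]
where I absorb the constants from \ref{hp8} and the $\gamma_{\pi^n}$ from \ref{hp10} into a single $\gamma>0$; the a-priori bounds of \S\ref{apb} confine $\|V^n\|_\#$ to a compact set, so a uniform lower bound for the $\gamma_{\pi^n}$ is available, exactly as in \S\ref{scepm}.

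Finally I would bound the two boundary terms. The key observation is that $\pi^{\pm N}$ and $\tilde\pi^{\pm N}$ are probability vectors, so $\sum_i(\pi_i^{\pm N}-\tilde\pi_i^{\pm N})=0$; hence each cross product is unchanged when a constant is added to $V^{\pm N}-\tilde V^{\pm N}$, and Cauchy--Schwarz may be applied directly in the $\#$-norm,
\[
\left|(\pi^{\pm N}-\tilde\pi^{\pm N})\cdot(V^{\pm N}-\tilde V^{\pm N})\right|\le \|\pi^{\pm N}-\tilde\pi^{\pm N}\|\,\|V^{\pm N}-\tilde V^{\pm N}\|_\#.
\]
Young's inequality then produces the four squared boundary norms, and dividing through by $\gamma$ yields the stated estimate (with $C=1/\gamma$ once the constants are tracked). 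I expect the main obstacle to be not any single estimate but the bookkeeping of the rearrangement in the first step — correctly identifying which cross terms telescope into the boundary contributions and which reassemble into the three groups — together with securing the uniform positivity of the strict-concavity constants $\gamma_{\pi^n}$ from the a-priori bounds.
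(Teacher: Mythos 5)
Your proposal is correct and follows essentially the same route as the paper: the same summed identity over $-N\le n\le N-1$, the same telescoping of the cross terms into the two boundary products, the same use of assumption \ref{hp10} for the first two groups and assumption \ref{hp8} for the third, and the same treatment of the boundary terms via the zero-sum property of $\pi^{\pm N}-\tilde\pi^{\pm N}$, Cauchy--Schwarz in the $\#$-norm, and Young's inequality. The only addition is your remark on securing a uniform lower bound for $\gamma_{\pi^n}$, a point the paper glosses over by writing a single $\gamma$.
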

\begin{proof}
As before, observe that
\begin{align*}
&0=\sum_{n=-N}^{N-1}(V^{n+1}-\tilde V^{n+1})\cdot[(\Kk_{V^{n+1}}(\pi^n)-\pi^{n+1})-(\Kk_{\tilde V^{n+1}}(\tilde \pi^n)-\tilde \pi^{n+1})]\\
&+\sum_{n=-N}^{N-1}(\pi^n-\tilde \pi^n)\cdot[(\Gg_{\tilde \pi^n}(\tilde V^{n+1})-\tilde V^{n})-(\Gg_{\pi^n}(V^{n+1})-V^{n})].
\end{align*}
Then rewriting the equation above we have
\begin{align*}
&\sum_{n=-N}^{N-1} (V^{n+1}-\tilde V^{n+1}) \cdot(\pi^{n+1}-\tilde \pi^{n+1})-(V^{n}-\tilde V^{n})\cdot (\pi^{n}-\tilde \pi^{n})\\
&=\sum_{n=-N}^{N-1} \pi^n\cdot (\Gg_{ \pi^n}(\tilde V^{n+1})-\Gg_{ \pi^n}( V^{n+1}))+\Kk_{V^{n+1}}(\pi^n)\cdot(V^{n+1}-\tilde V^{n+1}) \;+\\
& +\sum_{n=_N}^{N-1} \tilde\pi^n\cdot (\Gg_{\tilde \pi^n}( V^{n+1})-\Gg_{\tilde \pi^n}( \tilde V^{n+1}))+\Kk_{\tilde V^{n+1}}(\tilde \pi^n)\cdot(\tilde V^{n+1}- V^{n+1}) \;+\\
&+ \sum_{n=-N}^{N-1} \pi^n\cdot (\Gg_{ \tilde\pi^n}(\tilde V^{n+1})-\Gg_{ \pi^n}( \tilde V^{n+1}))+\tilde \pi^n\cdot (\Gg_{ \pi^n}( V^{n+1})-\Gg_{ \tilde \pi^n}(  V^{n+1})).
\end{align*}
Now, for each $-N\leq n\leq N-1$, we have that
$$\pi^n\cdot (\Gg_{ \pi^n}(\tilde V^{n+1})-\Gg_{ \pi^n}( V^{n+1}))+\Kk_{V^{n+1}}(\pi^n)\cdot(V^{n+1}-\tilde V^{n+1})\leq
-\gamma  \|V^{n+1}-\tilde V^{n+1}\|_{\#}^2,
$$ and similarly for the terms of the second line. In the third line we have
$$\pi^n\cdot (\Gg_{ \tilde\pi^n}(\tilde V^{n+1})-\Gg_{ \pi^n}( \tilde V^{n+1}))+\tilde \pi^n\cdot (\Gg_{ \pi^n}( V^{n+1})-\Gg_{ \tilde \pi^n}(  V^{n+1}))\leq -\gamma \|\pi^n-\tilde \pi^n\|^2.$$
Consequently
\begin{align*}
&\sum_{n=-N}^{N-1} \|\pi^n-\tilde \pi^n\|^2
+2\|V^{n+1}-\tilde V^{n+1}\|_{\#}^2
\\
&\quad \leq
\frac{1}{\gamma} \left(
(\pi^N-\tilde \pi^N)\cdot(\tilde V^N- V^N)+(\pi^{-N}-\tilde \pi^{-N})\cdot(V^{-N}-\tilde V^{-N})\right).
\end{align*}

Note that, if $c$ is the constant vector  then $(\pi^k-\tilde\pi^k)\cdot \mu=0$, where $k=N, -N$. Also, there exists $\mu_k$ such that $\|\tilde V^k- V^k \|=| \tilde V^k- V^k+\mu_k|.$
Hence
\begin{align*}
&\sum_{n=-N+1}^{N-1} \|\pi^n-\tilde \pi^n\|^2
+\|V^{n}-\tilde V^{n}\|_{\#}^2
\\
&\quad \leq
C \left(
\|\pi^N-\tilde \pi^N\|^2+\|V^N-\tilde V^N\|_{\#}^2+\|\pi^{-N}-\tilde \pi^{-N}\|^2+\|V^{-N}-\tilde V^{-N}\|_{\#}^2\right),
\end{align*}
if we denote $C=1/\gamma$.
\end{proof}

Define $f_0= \|\pi^0-\tilde \pi^0\|^2
+ \|V^{0}-\tilde V^{0}\|_{\#}^2$, and, for $n>0$
\[
f_n=\|\pi^n-\tilde \pi^n\|^2
+\|V^{n}-\tilde V^{n}\|_{\#}^2+\|\pi^{-n}-\tilde \pi^{-n}\|^2
+\|V^{-n}-\tilde V^{-n}\|_{\#}^2.
\]
The previous proposition implies
\begin{equation}
\label{estimate}
\sum_{n=0}^{N-1} f_n \leq C f_N.
\end{equation}

Note that the previous proposition and lemma \ref{bound} imply
\begin{equation} \label{fN}
f_N\leq \|\pi^N-\tilde \pi^N\|^2+\|\pi^{-N}-\tilde \pi^{-N}\|^2+\|V^{N}-\tilde V^{N}\|_{\#}^2+(\|V^{N}-\tilde V^{N}\|+N2K)^2\,.
\end{equation}
The next lemma is the only missing tool to get exponential decay:
\begin{lem}\label{numeros}
Suppose $f_n\geq 0$ and that
\begin{equation}
\label{22B}
\sum_{n=0}^{N-1} f_n \leq C f_N.
\end{equation}
Then
\[
f_0\leq C\left(\frac{C}{C+1}\right)^{N-1} f_N.
\]
\end{lem}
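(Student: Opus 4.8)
The plan is to pass to the partial sums $S_M=\sum_{n=0}^{M-1}f_n$ and exploit the self-similar structure of the estimate. The first thing to observe is that the hypothesis \eqref{22B} is really available not just for the full horizon $N$ but at every intermediate index: restricting the two given solutions to a symmetric sub-window $\{-M,\dots,M\}$ with $1\le M\le N$ again produces a pair of solutions of the mean field game, so proposition \ref{trendtoeq} applies verbatim (this is exactly how \eqref{estimate} was obtained) and yields
\[
S_M=\sum_{n=0}^{M-1}f_n\le C\,f_M\qquad\text{for every }1\le M\le N .
\]
It is this whole family of inequalities, and not the single instance at $N$, that forces the geometric decay.

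Next I would rewrite $f_M=S_{M+1}-S_M$ and substitute into $S_M\le C f_M$, which gives
\[
(C+1)\,S_M\le C\,S_{M+1},\qquad 1\le M\le N-1 .
\]
Since all $f_n\ge0$, the partial sums are nonnegative and nondecreasing, so this inequality says precisely that $(S_M)$ grows at least geometrically with ratio $(C+1)/C>1$. Iterating from $M=1$ to $M=N-1$ yields $\big(\tfrac{C+1}{C}\big)^{N-1}S_1\le S_N$, that is $f_0=S_1\le\big(\tfrac{C}{C+1}\big)^{N-1}S_N$. Combining with the case $M=N$, namely $S_N\le C f_N$, produces the claimed bound $f_0\le C\big(\tfrac{C}{C+1}\big)^{N-1}f_N$.

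I expect the only genuine subtlety to be the first step. Taken literally, the lemma lists a single summed inequality, but the geometric argument needs it at every intermediate $M$; a single summed inequality cannot by itself force exponential decay, as one sees by spreading the mass evenly among $f_0,\dots,f_{N-1}$. The justification is the truncation invariance of the mean field equations, which makes each restricted pair a bona fide solution and hence makes proposition \ref{trendtoeq} reusable at each horizon. Everything else is routine: by keeping the recursion in the form $(C+1)S_M\le C S_{M+1}$ I avoid any division, so the degenerate case $f_M=0$ — which forces $S_M=0$ and hence $f_0=0$, making the bound trivial — is absorbed automatically without a separate argument.
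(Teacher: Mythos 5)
Your argument is correct and, once unwound, is essentially the paper's own: setting $S_M=\sum_{n=0}^{M-1}f_n$, your recursion $(C+1)S_M\le C S_{M+1}$ gives $f_M=S_{M+1}-S_M\ge \frac1C S_M\ge \frac1C\left(\frac{C+1}{C}\right)^{M-1}f_0$, which is precisely the inductive estimate the paper sums in its induction step; the partial-sum bookkeeping is just a cleaner packaging of the same geometric growth with ratio $(C+1)/C$. The one substantive point you raise is well taken, and it identifies an imprecision in the lemma as literally stated rather than a defect of your proof: the single inequality \eqref{22B} at level $N$ alone does not imply the conclusion (take $f_0=1$, $f_1=\dots=f_{N-1}=0$, $f_N=1/C$), and the paper's induction silently invokes \eqref{22B} at every level $1\le M\le N$, since its induction step applies the inductive conclusion at each $n\le N$, which requires the hypothesis at each such $n$. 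Your justification of the full family of inequalities --- restricting the two solutions to the sub-window $\{-M,\dots,M\}$ and reapplying proposition \ref{trendtoeq} at each horizon --- is exactly the right repair and is how the lemma is actually used in the proof of theorem \ref{teoF}.
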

\begin{proof}
The proof follows by induction. The case $N=1$ is simply a particular case of \eqref{22B}:
\[
f_0\leq C f_1.
\]
Now, observe that
\begin{align*}
C f_{N+1}&\geq \sum_{n=0}^{N} f_n\geq  f_0+\sum_{n=1}^{N} \frac 1 C \left(\frac{C+1}{C}\right)^{n-1} f_0\\
&=f_0\left[
1+ \frac 1 C \frac{\left(\frac{C+1}{C}\right)^N-1}{\frac{C+1}{C}-1}
\right]
=f_0\left[
1+ \left(\frac{C+1}{C}\right)^N-1
\right]=f_0 \left(\frac{C+1}{C}\right)^N,
\end{align*}
which ends the proof.
\end{proof}





\begin{teo}
\label{teoF}
Suppose assumptions \ref{hp4}, \ref{hp7}, \ref{hp8}, \ref{hp10} and \ref{hp11} hold.
Fix  $\tilde V, \tilde \pi $. Given $N>0$, denote by $(\pi^0_N,V^0_N)$  the solution of the mean field game at time 0 that has initial distribution $\pi^{-N}=\tilde \pi$ and terminal cost $V^{N}=\tilde V$.


Then, as $N\to \infty$
\[
V_N^0\to \bar V \;( \mbox{ in } \re^d/\Rr), \quad \pi_N^0\to \bar \pi
\]
where $\bar V$ and $\bar \pi$ is the unique stationary solution.
\end{teo}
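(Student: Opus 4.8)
The plan is to compare the given solution on the window $\{-N,\dots,N\}$ with the stationary solution, the latter realized as a genuine time-dependent solution of the mean field game \eqref{mfgc} on the same window, and then to feed this pair into the a priori estimates of this section. Let $(\bar\pi,\bar V)$ denote the unique stationary solution (uniqueness from Theorem \ref{teoC}) and $\bar\lambda$ its critical value. First I would check that
\[
\pi_*^n=\bar\pi,\qquad V_*^n=\bar V+(N-n)\bar\lambda,\qquad -N\leq n\leq N,
\]
is a solution of the mean field game. Since $\Gg_{\bar\pi}$ commutes with the addition of constants,
\[
\Gg_{\bar\pi}(V_*^{n+1})=\Gg_{\bar\pi}(\bar V)+(N-n-1)\bar\lambda=\bar V+(N-n)\bar\lambda=V_*^n ,
\]
and since under assumption \ref{hp7} the Nash minimizer $P(\pi,V)$ is unchanged when a constant is added to $V$, the forward equation gives $\Kk_{V_*^{n+1}}(\bar\pi)=\bar\pi\bar P=\bar\pi=\pi_*^{n+1}$.

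Next I would apply Proposition \ref{trendtoeq}, estimate \eqref{estimate}, and Lemma \ref{numeros} to the pair formed by the given solution $(\pi^n,V^n)$, with data $\pi^{-N}=\tilde\pi$ and $V^N=\tilde V$, and the stationary solution $(\pi_*^n,V_*^n)$ just constructed. With $f_n$ the associated distance, so that $f_0=\|\pi^0_N-\bar\pi\|^2+\|V^0_N-V_*^0\|_\#^2$, this yields
\[
f_0\leq C\Bigl(\frac{C}{C+1}\Bigr)^{N-1}f_N .
\]
It then remains to control the growth of $f_N$ in $N$. Three of its four terms are uniformly bounded: $\|\pi^{N}-\bar\pi\|$ and $\|\pi^{-N}-\bar\pi\|$ because all vectors involved are probability vectors, and $\|V^N-V_*^N\|_\#=\|\tilde V-\bar V\|_\#$ because $V_*^N=\bar V$. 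For the remaining term $\|V^{-N}-V_*^{-N}\|_\#$ I would invoke Lemma \ref{bound} (which uses assumption \ref{hp11}), giving $\|V^{-N}-V_*^{-N}\|\leq\|\tilde V-\bar V\|+2NK$; this is precisely the content of \eqref{fN}, and it shows that $f_N$ grows at most quadratically in $N$.

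The main obstacle, and the crux of the theorem, is exactly this competition: the value function at the far past boundary $n=-N$ may drift linearly in $N$ through the accumulated critical value $\bar\lambda$, so $f_N$ is only controlled quadratically, whereas the self-improving estimate \eqref{estimate} contracts $f_0$ by the exponential factor $\bigl(\tfrac{C}{C+1}\bigr)^{N-1}$. Because $0<\tfrac{C}{C+1}<1$, the exponential dominates the polynomial and $f_0\to 0$ as $N\to\infty$. Finally, unwinding the definition of $f_0$ and using that the $\#$-norm discards the additive constant $N\bar\lambda$ present in $V_*^0=\bar V+N\bar\lambda$, I obtain
\[
\|\pi^0_N-\bar\pi\|^2+\|V^0_N-\bar V\|_\#^2=f_0\longrightarrow 0 ,
\]
which gives $\pi^0_N\to\bar\pi$ and $V^0_N\to\bar V$ in $\Rr^d/\Rr$, in fact with exponential rate.
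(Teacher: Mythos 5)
Your proposal is correct and follows essentially the same route as the paper: compare the given solution with the stationary one (viewed as a solution of the initial--terminal problem on $\{-N,\dots,N\}$), then combine Proposition \ref{trendtoeq}, the quadratic bound \eqref{fN} from Lemma \ref{bound}, and the exponential decay of Lemma \ref{numeros}. Your write-up usefully makes explicit the step the paper leaves implicit, namely that $V_*^n=\bar V+(N-n)\bar\lambda$, $\pi_*^n=\bar\pi$ is a genuine solution with data $(\bar\pi,\bar V)$, so that the map $\Xi_N$ fixes $(\bar\pi,\bar V)$ in $\Sss\times\Rr^d/\Rr$.
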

\begin{proof}
By theorem \ref{teoE}, we can define, for each $N$, a map $\Xi_N:\mathbb{S}\times \re^d/\Rr \to \mathbb{S}\times \re^d/\Rr$, that
associates to each pair $(\tilde \pi,\tilde V)$ the pair $(\pi^0_N,V^0_N)$.
Here $\mathbb{S}\times \re^d/\mathbb{R}$ is given the product topology where in $\re^d/\mathbb{R}$ we consider the norm $\|U\|_{\#}=\inf_{k\in\re}\|U+k  \|$.


Now, lemma \ref{numeros} and equation (\ref{fN}) show that, for any two pairs $(V,\pi)$ and $(\tilde V, \tilde \pi)$, we have
$$\Xi_N(V,\pi)-\Xi_N(\tilde V,\tilde \pi) \to (0,0)$$ as $N\to \infty$.
Thus,
$$\Xi_N(V,\pi)-(\bar V,\bar \pi) = \Xi_N(V,\pi)-\Xi_N(\bar V,\bar \pi) \to (0,0)$$ as $N\to \infty$.
\end{proof}






\end{document}